\documentclass[a4paper]{article}

\usepackage[margin=1in]{geometry}
\usepackage{graphicx}%
\usepackage{multirow}%
\usepackage{amsmath,amssymb,amsfonts}%
\usepackage{amsthm}%
\usepackage{mathrsfs}%
\usepackage[title]{appendix}%
\usepackage{xcolor}%
\usepackage{textcomp}%
\usepackage{manyfoot}%
\usepackage{booktabs}%
\usepackage{algorithm}%
\usepackage{algorithmicx}%
\usepackage{algpseudocode}%
\usepackage{listings}%
\usepackage{tikz-cd}
\usepackage{lineno}
\tikzcdset{every label/.append style = {font = \small}}
\usepackage{tikz}
\usetikzlibrary{graphs,decorations.pathmorphing,decorations.markings}
\usepackage{subcaption}
\usepackage{url}
\usepackage{hyperref}

\newtheorem{theorem}{Theorem}
%

%
\newtheorem*{theorem*}{Theorem}

\newtheorem{definition}{Definition}%

\newtheorem{lemma}[theorem]{Lemma}

\usetikzlibrary{graphs,decorations.pathmorphing,decorations.markings}
\usepackage{multirow}

\numberwithin{equation}{section}

\begin{document}

\title{The Effect of Vaccination on the Competitive Advantage of Two Strains of an Infectious Disease}

\author{%
  Matthew D. Johnston$^{*}$, 
  Bruce Pell, 
  Jared Pemberton, and  David A. Rubel \\ \\ Department of Mathematics + Computer Science\\Lawrence Technological University\\21000 W 10 Mile Rd, Southfield, MI 48075, USA\\ \tt{${}^{*}$mjohnsto1@ltu.edu}
}




\maketitle

\abstract{
We investigate how a population's natural and vaccine immunity affects the competitive balance between two strains of an infectious disease with different epidemiological characteristics. Specifically, we consider the case where one strain is more transmissible and the other strain is more immune-resistant. The competition of these strains is modeled by two SIR-type models which incorporate waning natural immunity and which have distinct mechanisms for vaccine immunity. Waning immunity is implemented as a gamma-distributed delay, which is analyzed using the linear chain trick to transform the delay differential equation systems into a system of ordinary differential equations. Our analysis shows that vaccination has a significant effect on the competitive balance between two strains, potentially leading to dramatic flips from one strain dominating in the population to the other. We also show that which strain gains an advantage as a population's immunity level increases depends upon the integration between the mechanisms of natural and vaccine immunity. The results of this paper are consequently relevant for public policy.
}


\section{Introduction}
\label{sec:introduction}
    
    The COVID-19 pandemic (2020-2023) and ensuing endemic phase has highlighted the importance of understanding the epidemiological factors by which new strains emerge, persist, co-exist, and dominate in a population. At any given time, there have been over 20 identified strains of COVID-19 circulating \cite{ourworldindataSARSCoV2Sequences}. This suggests that COVID-19 strains have a strong capacity for \emph{co-existence} in the global population. At the same time, however, we have seen two global variant takeover events (the Delta variant (B.1.617.2) takeover from 05/21-09/21 and the Omicron variant (B.1.1.529) takeover from 11/21-02/22). This suggests that there are critical epidemiological factors of COVID-19 variants which contribute to \emph{mutual exclusion} of strains.
    
    Narratives surrounding the emergence of COVID-19 variants have largely revolved around the increased transmissibility of emerging strains \cite{caCaliforniaDepartment}. Several recent mathematical modeling papers have suggested that, in competitions of two strains, mutually exclusion is the only realistic long-term possibility with the strictly more transmissible strain surviving while the others do not \cite{baba2018,kaymakamzade2018,wang2022,meskaf2020}. This mutual exclusion conclusion has also been reached in studies when vaccination is incorporated in the model \cite{ahumada2023,yaagoub2023-1}. However, factors which can contribute to co-existence of strains in mathematical models are historically known, and include co-infection, cross-immunity, seasonal fluctuations, and age-specific patterns of infectiousness \cite{white1998,martcheva2015}. In the context of COVID-19, modeling studies have illustrated instances of co-existence, driven by non-standard force of infection terms \cite{khyar2020, wang2022-3}, strain-specific vaccination strategies \cite{fudolig2020}, and delayed cross-immunity \cite{pell2023,Johnston2023}. Furthermore, recent cross-neutralization experiments on COVID-19 have suggested that the Omicron variant belongs to a different antigen cluster than prominent previous strains, such as Delta \cite{Kudriavtsev2022}. That paper hypothesizes that the resulting immune-resistance, rather than strict transmissibility, was responsible for Omicron's success in out-competing other strains in 2021 and 2022.
    
    In this paper, we address this gap in knowledge by investigating the relationship between transmissibility, vaccination, and immune-resistance. Specifically, we seek answers to the following two questions:
    \begin{enumerate}
        \item \emph{How does changing a population's effective immunity level through vaccination impact the competitive balance between two strains of an infectious disease?
        }
        \item \emph{How does a change in competitive balance between two strains depend upon the relationship between the mechanisms of the immune response following vaccination and previous infection?
        }
    \end{enumerate}
    Ultimately, we show the following results: (1) Changes in a population's immunity level can significantly alter the competitive balance between two strains, potentially catalyzing a shift from one strain's dominance to the other's; and (2) Disparities in the mechanisms of vaccine immunity and natural immunity can influence which strain gains a competitive advantage as a population's immunity level rises. When the two strains have disparate impacts on a population, for example when one strain is more virulent than the other, changing immunity levels in a population could have counter-intuitive effects on a population's well-being. These considerations are consequently important to public policy.

    We address these questions by considering two strains of an infectious disease, one which is more transmissible while the other is more immune-resistant. We introduce two SIR-type models of infectious disease spread which incorporate vaccination, immune-resistance, and waning immunity. In the first model, vaccine and natural immunity are \emph{integrated}; that is, the immune response after receiving a vaccine or recovering from an infection by either strain are identical. In the second model, vaccine and natural immunity are \emph{separated}, so that the immune responses are independent. We incorporate waning natural immunity as gamma-distributed delays which we transform into a system of ordinary differential equations through novel application of the `linear chain trick' \cite{Hurtado2019,Smith2010}. Gamma-distributed delays have been used significantly in recent years to model waning immunity, hospitalization time, and recovery times in infectious disease spread modeling \cite{ZHANG2019,Kamiya2022,Steindorf2022,Otunuga2022}. We show that there are four steady states (a disease-free state, a Strain 1-only endemic state, a Strain 2-only endemic state, and a co-existence endemic state) and compute the basic, strain-specific basic, and strain-specific competitive reproduction numbers. We conduct bifurcation analysis on the two strains with a focus on the parameters controlling the relative transmissibility and immune-resistance of the two strains, and the population's effective immunity level. These modeling results are consequently relevant for public policy as they suggest that vaccination influences strain-over-strain fitness.

\section{Basic Two Strain Models}
\label{sec:model}

In this section, we introduce two basic models of two-strain infectious disease spread with vaccination, which we expand upon in Section \ref{sec:delay}. In both models, the infection is broken down into two strains, which we label $I_1$ and $I_2$, and Strain 2 is assumed to have a greater degree of immune-resistance than Strain 1. Specifically, in the first model (the \emph{integrated immunity model}), Strain 2 has greater resistance to immunity provided by both previous infection by either strain and also to vaccination. In the second model (the \emph{separated immunity model}), Strain 2 has greater resistance to natural immunity from either strain but no greater immunity to the immunity provided by vaccination. The models we introduce are built upon the SIR (Susceptible-Infectious-Removed) models introduced by Kermack and McKendrick \cite{Kermack1927} and studied significantly since.

We consider multiple models with distinct mechanisms for vaccine immunity for two reasons: (1) to account for the incomplete and developing understanding of the relationship between vaccine and natural immunity, specifically within COVID-19 variants; and (2) to emphasize the starkly different responses infectious disease spread models can have based on different assumptions on the nature of this immunity. To the first point, studies have varied in their assessment of the relative immunity provided by vaccine and previous infection, with several randomly controlled studies concluding that natural immunity is stronger than vaccine immunity \cite{Gazit2021,Shenai2021} while other observational studies have concluded they are equivalent \cite{Townsend2022}. There are also recognized differences in the mechanisms by which vaccine and natural immunity manifest in the body, with vaccine immunity focusing on the production of antibodies, which are effective at fighting initial infection, while natural immunity also generates memory B-cells and cross-reaction T-cell responses, which are more responsive to fighting illness after infection and may be more robust to multiple strains \cite{Dolgin2021,Barouch2022}. To the second point, we will see in Section \ref{sec:example} that subtle changes to the nature of immunity can lead to wildly different predictions in terms of which strain will survive in a population.


We are particularly interested in how the distinct epidemiological characteristics of the strains factor into the competitive advantage for Strain 1 over Strain 2 or vice versa. In order to investigate this, we consider the basic reproduction number of the disease and the individual strains. The basic reproduction number of an infectious disease, denoted $\mathscr{R}_0$, is the expected number of new infections produced by a single infectious individual in a fully susceptible population \cite{Diekmann1990}. Consequently, the condition $\mathscr{R}_0 > 1$ indicates that the disease is likely to spread while the condition $\mathscr{R}_0 < 1$ indicates the disease is likely to decline. Similarly, strain-specific reproduction numbers can be defined. $\mathscr{R}_i$ is the basic reproduction number for Strain $i$ and a value $\mathscr{R}_i > 1$ indicates that Strain $i$ can infiltrate a disease-free population. $\mathscr{R}_{ij}$ is the basic reproduction number for Strain $i$ in the presence of Strain $j$. In this case, $\mathscr{R}_{ij} > 1$ indicates that Strain $i$ can infiltrate a population already infected at endemic levels by Strain $j$. We refer to $\mathscr{R}_0$ as the basic reproduction number, to $\mathscr{R}_i$ as the strain-specific basic reproduction numbers, and to $\mathscr{R}_{ij}$ as the strain-specific competitive reproduction numbers. Collectively, we refer to these values as reproduction numbers. For compartmental models of infectious disease spread, it is common to calculate the reproduction numbers using the next generation matrix method \cite{Diekmann1990,VANDENDRIESSCHE2002,Heffernan2005,diekmann2009,VANDENDRIESSCHE2017}. We present the details of this method in Appendix \ref{sec:2}.

With these reproduction numbers, we define the following distinct behaviors for two-strain infectious disease spread models.
\begin{definition}
\label{behavior}
Consider a two-strain infectious disease spread model with basic reproduction number $\mathscr{R}_0$, strain-specific basic reproduction numbers $\mathscr{R}_1$ and $\mathscr{R}_2$, and strain-specific competitive reproduction numbers $\mathscr{R}_{12}$ and $\mathscr{R}_{21}$. We say that the model exhibits:
\begin{enumerate}
    \item \textbf{disease-free behavior (DF)} if $\mathscr{R}_0 < 1$ (i.e. $\mathscr{R}_1 < 1$ and $\mathscr{R}_2 < 1$)
    \item \textbf{strain 1-only behavior (S1)} if 
    \begin{enumerate}
        \item $\mathscr{R}_1 > 1$ and $\mathscr{R}_2 < 1$, or 
        \item $\mathscr{R}_1 > 1$, $\mathscr{R}_2 > 1$, $\mathscr{R}_{12} > 1$, and $\mathscr{R}_{21} < 1$    
    \end{enumerate}
    \item \textbf{strain 2-only behavior (S2)} if 
    \begin{enumerate}
        \item $\mathscr{R}_1 < 1$ and $\mathscr{R}_2 > 1$, or 
        \item $\mathscr{R}_1 > 1$, $\mathscr{R}_2 > 1$, $\mathscr{R}_{12} < 1$, and $\mathscr{R}_{21} > 1$
    \end{enumerate}
    \item \textbf{co-existence behavior (C)} if $\mathscr{R}_1 > 1$, $\mathscr{R}_2 > 1$, $\mathscr{R}_{12} > 1$, and $\mathscr{R}_{21} > 1$.
\end{enumerate}
\end{definition}
We note that the transition of a basic reproduction number $\mathscr{R}_i$ from $\mathscr{R}_i<1$ to $\mathscr{R}_i>1$ implies the destabilization of the corresponding equilibrium but does not necessarily imply the stability of any other steady state. In particular, classifying the system as having co-existence behavior does not guarantee that the co-existence steady state is locally or globally asymptotically stable. Our numerical results suggest that local asymptotic stability is attained in all cases considered here; however, we leave a full investigation as future work.

\subsection{Basic Two-Strain Integrated Immunity Model}

The first model we consider has four compartments: Susceptibles ($S$), those infectious with Strain 1 ($I_1)$, those infectious with Strain 2 ($I_2$), and those who are temporarily immune from Strain 1 ($R$). We suppose that susceptible individuals are infected by those with Strain 1 at rate $\beta_1$ and those with Strain 2 at rate $\beta_2$. Individuals infected by either Strain 1 or Strain 2 recover at rate $\gamma$ and susceptible individuals vaccinate at rate $\lambda$. Individuals who have recently received an immune boost, either from natural or vaccine immunity, may be infected by the immune-resistant Strain 2 at rate $\beta_2$ or lose their temporary immunity to Strain 1 at rate $\alpha$. Note that, in this model, the immunity provided by vaccination is identical to that provided by a previous infection by either Strain 1 or Strain 2. 

These assumptions give the following model:
\begin{equation}
    \small
    \label{SIR1}
    \begin{tikzcd}
\mbox{\fbox{\begin{tabular}{c} Susceptible \\($S$)\end{tabular}}} \arrow[ddr,bend right=25,"\beta_1"'] \arrow[dr,"\beta_2"'] \arrow[rr,yshift=0.75ex,"\lambda"] & & \mbox{\fbox{\begin{tabular}{c} Partially \\ Immune \\($R$)\end{tabular}}} \arrow[ll,yshift=-0.75ex,"\alpha"] \arrow[dl, yshift=0.5ex,xshift=-0.5ex,"\beta_2"']\\[-0.25in]
 & \mbox{\fbox{\begin{tabular}{c} Strain 2 \\($I_2$)\end{tabular}}} \arrow[ur,yshift=-0.5ex,xshift=0.5ex,"\gamma"'] &  \\[-0.25in]
 & \mbox{\fbox{\begin{tabular}{c} Strain 1 \\($I_1$)\end{tabular}}} \arrow[uur,bend right=25,"\gamma"'] &
\end{tikzcd}
\end{equation}
The model \eqref{SIR1} corresponds to the following system of ordinary differential equations:
\begin{equation}
    \label{SIR1-DE}
    \left\{ \; \;
        \begin{aligned}
            \frac{dS}{dt} & = - \frac{\beta_1}{N} SI_1 - \frac{\beta_2}{N} SI_2 - \lambda S + \alpha R \\
            \frac{dI_1}{dt} & = \frac{\beta_1}{N} SI_1 - \gamma I_1 \\
            \frac{dI_2}{dt} & = \frac{\beta_2}{N} (S + R)I_2 - \gamma I_2 \\
            \frac{dR}{dt} & = \gamma (I_1 + I_2) - \frac{\beta_2}{N} RI_2 +\lambda S - \alpha R
        \end{aligned}
    \right.
\end{equation}
where the parameters are as in Table \ref{table1} and there is the population conservation relation $N = S + R + I_1 + I_2$. 
For the model \eqref{SIR1-DE}, we have the following values:
\begin{equation}
    \label{R01}
\begin{aligned}
& \mathscr{R}_1 = \frac{\beta_1}{\gamma} \left( \frac{\alpha }{\alpha+ \lambda}\right), \mathscr{R}_2 = \frac{\beta_2}{\gamma}, \mathscr{R}_0 = \max \{ \mathscr{R}_1, \mathscr{R}_2 \},\\
& \mathscr{R}_{12} = \frac{\beta_1}{\beta_2} \left( \frac{\alpha}{ \alpha + \beta_2 + \lambda -\gamma}\right), \mathscr{R}_{21} = \frac{\beta_2}{\beta_1} \left( \frac{\beta_1 + \alpha + \lambda}{\gamma + \alpha} \right).
\end{aligned}
\end{equation}
These basic reproduction number values result from special cases of the computations provided in Appendix \ref{sec:21}. We consider bifurcations, steady state diagrams, and numerical simulations of \eqref{SIR1-DE} in Section \ref{sec:example}.

\begin{table}[t!]
    \centering
    \begin{tabular}{l|c|l}
    \hline \hline
    Variable & Units & Description \\
    \hline \hline
    $S, S_0 \geq 0$ & people & Susceptible individuals \\
    $R, S_i \geq 0$ & people & Partially immune individuals ($i=1,\ldots, k$ levels) \\
    $I_1 \geq 0$ & people & Infectious individuals (Strain $1$) \\
    $I_2 \geq 0$ & people & Infectious individuals (Strain $2$) \\
    $V \geq 0$ & people & Vaccinated individuals \\
    $t \geq 0$ & days & Time elapsed \\
    \hline \hline
    Parameter & Units & Description  \\
    \hline \hline
    $N \geq 0$ & people & Population size \\
    $\beta_1 \geq 0$ & days$^{-1}$ & Transmission rate (Strain $1$) \\
    $\beta_2 \geq 0$ & days$^{-1}$ & Transmission rate (Strain $2$) \\
    $\lambda \geq 0 $ & days$^{-1}$ & Vaccination rate \\
    $\gamma^{-1} > 0$ & days & Mean infectious period\\
    $\alpha^{-1} > 0$ & days & Temporary immunity period from Strain 1\\
    $\left( 1 - \frac{r}{k} \right)\alpha^{-1} \geq 0$ & days & Temporary immunity period from Strain 2\\
    $0 \leq \epsilon \leq 1$ & --- & Population's effective vaccination proportion \\
    $k \geq 0$ & --- & Number of partially immunity states \\
    $0 \leq r \leq k$ & --- & Degree of Strain 2 partial immune resistance\\
    $\mathscr{R}_0 \geq 0$ & --- & Basic reproduction number of disease \\
    $\mathscr{R}_1 \geq 0$ & --- & Basic reproduction number of Strain $1$ \\
    $\mathscr{R}_2 \geq 0$ & --- & Basic reproduction number of Strain $2$ \\
    $\mathscr{R}_{12} \geq 0$ & --- & Basic reproduction number of Strain $1$ in presence of Strain $2$ \\
    $\mathscr{R}_{21} \geq 0$ & --- & Basic reproduction number of Strain $2$ in presence of Strain $1$ \\
    \hline \hline
    \end{tabular}
    \caption{\small Variables and parameters for the two-strain models: \eqref{SIR1}, \eqref{SIR2}, \eqref{SIR3}, \eqref{SIR4}, \eqref{SIR5}, and \eqref{SIR6}.}
    \label{table1}
\end{table}

\subsection{Basic Two-Strain Separated Immunity Model}

We now consider a model where vaccination is incorporated but the immunity provided by vaccination is separated from natural immunity. We introduce a compartment for vaccinated individuals, $V$, and allow susceptible and partially immune individuals to vaccinate at the rate $\lambda$ into compartment $V$, and vaccinated individuals to lose their immunity at rate $\alpha$.

These assumptions give rise to the following model:
\begin{equation}
    \small
    \label{SIR2}
    \begin{tikzcd}
    & \mbox{\fbox{\begin{tabular}{c} Vaccinated \\($V$)\end{tabular}}} \arrow[dl,yshift=-0.5ex,xshift=0.5ex,"\alpha"] & \\[-0.25in]
    \mbox{\fbox{\begin{tabular}{c} Susceptible \\($S$)\end{tabular}}} \arrow[ddr,bend right=25,"\beta_1"'] \arrow[dr,"\beta_2"] \arrow[ur,yshift=0.5ex,xshift=-0.5ex,"\lambda"] & & \mbox{\fbox{\begin{tabular}{c} Partially \\ Immune \\($R$)\end{tabular}}} \arrow[ll,"\alpha"] \arrow[ul,"\lambda"'] \arrow[dl, yshift=0.5ex,xshift=-0.5ex,"\beta_2"']\\[-0.25in]
 & \mbox{\fbox{\begin{tabular}{c} Strain 2 \\($I_2$)\end{tabular}}} \arrow[ur,yshift=-0.5ex,xshift=0.5ex,"\gamma"'] & \\[-0.25in]
 & \mbox{\fbox{\begin{tabular}{c} Strain 1 \\($I_1$)\end{tabular}}} \arrow[uur,bend right=25,"\gamma"']  &
\end{tikzcd}
\end{equation}
Notice that, unlike in the integrated immunity model \eqref{SIR1}, in the separated immunity model \eqref{SIR2} vaccinated individuals may not be infected by Strain 2 before their immunity wanes. Consequently, Strain 2 is resistant to natural immunity but not to vaccine immunity. To account for this disparity between Strain 1 and Strain 2, we have allowed partially immune individuals to vaccinate since they are still susceptible to Strain 2 otherwise.

The model \eqref{SIR2} corresponds to the following system of ordinary differential equations:
\begin{equation}
    \label{SIR2-DE}
    \left\{ \; \;
        \begin{aligned}
            \frac{dS}{dt} & = - \frac{\beta_1}{N} SI_1 - \frac{\beta_2}{N} SI_2 - \lambda S + \alpha R + \alpha V \\
            \frac{dI_1}{dt} & = \frac{\beta_1}{N} SI_1 - \gamma I_1 \\
            \frac{dI_2}{dt} & = \frac{\beta_2}{N} (S + R)I_2 - \gamma I_2 \\
            \frac{dR}{dt} & = \gamma (I_1 + I_2) - \frac{\beta_2}{N} RI_2 - (\lambda + \alpha) R\\
            \frac{dV}{dt} & = \lambda (S + R) - \alpha V
        \end{aligned}
    \right.
\end{equation}
where the parameters are as in \eqref{SIR1-DE} (see Table \ref{table1}) and there is the conservation law $N = S + R + I_1 + I_2 + V$. We have the following reproduction numbers:
\begin{equation}
    \label{R02}
\begin{aligned}
& \mathscr{R}_1 = \frac{\beta_1}{\gamma} \left( \frac{ \alpha }{\alpha+ \lambda} \right), \mathscr{R}_2 = \frac{\beta_2}{\gamma} \left( \frac{\alpha}{\alpha + \lambda} \right), \mathscr{R}_0 = \max \{ \mathscr{R}_1, \mathscr{R}_2 \},\\
& \mathscr{R}_{12} = \frac{\beta_1}{\beta_2} \left( \frac{ \alpha (\lambda + \alpha)}{\alpha (\beta_2 - \gamma + \lambda + \alpha)- \gamma \lambda} \right), \mathscr{R}_{21} = \frac{\beta_2}{\beta_1} \left( \frac{ \alpha ( \alpha + \beta_1 + \lambda)}{ (\lambda + \alpha)(\gamma + \alpha)}\right).\\
\end{aligned}
\end{equation}
These basic reproduction number values result from special cases of the computations provided in Appendix \ref{sec:22}. We consider bifurcations, steady state diagrams, and numerical simulations of \eqref{SIR2-DE} in Section \ref{sec:example}.



\subsection{Example}
\label{sec:example}

We now consider the effect of vaccination on the competitive balance between Strain 1 and Strain 2 
by considering specific instances of the two-strain integrated immunity model \eqref{SIR1} and the two-strain separated immunity model \eqref{SIR2}.

\begin{figure}[t!]
\centering
     \begin{subfigure}[h]{0.45\textwidth}
         \centering
         \includegraphics[width=\textwidth]{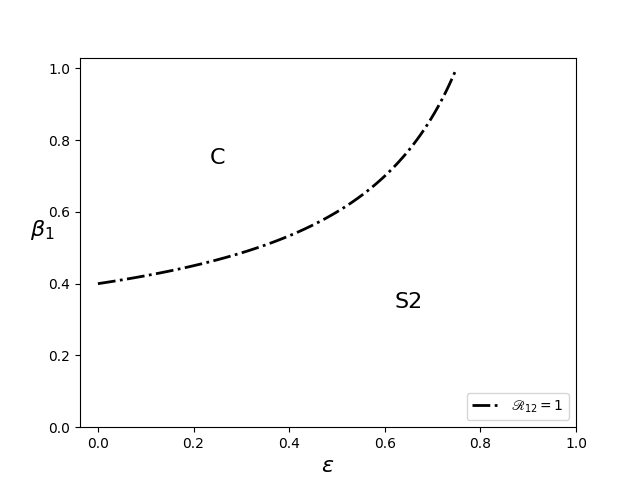}
         \caption{\footnotesize Bifurcation diagram for basic integrated immunity model \eqref{SIR1}}
     \end{subfigure}
     \hfill
     \begin{subfigure}[h]{0.45\textwidth}
         \centering
         \includegraphics[width=\textwidth]{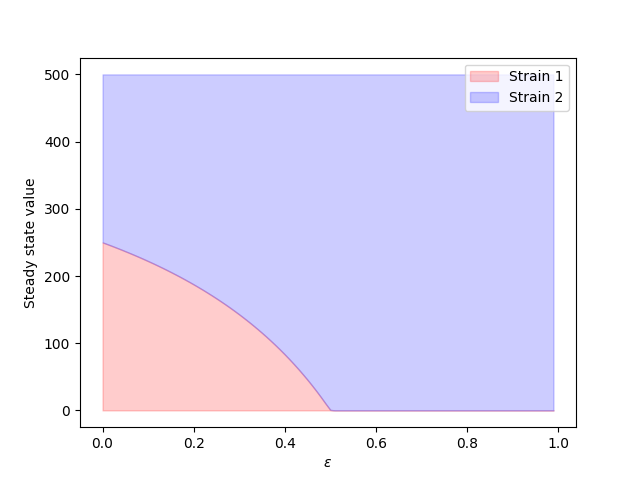}
         \caption{\footnotesize Steady state plot for basic integrated immunity model \eqref{SIR1}}
     \end{subfigure}
     \newline
     \begin{subfigure}[h]{0.45\textwidth}
         \centering
         \includegraphics[width=\textwidth]{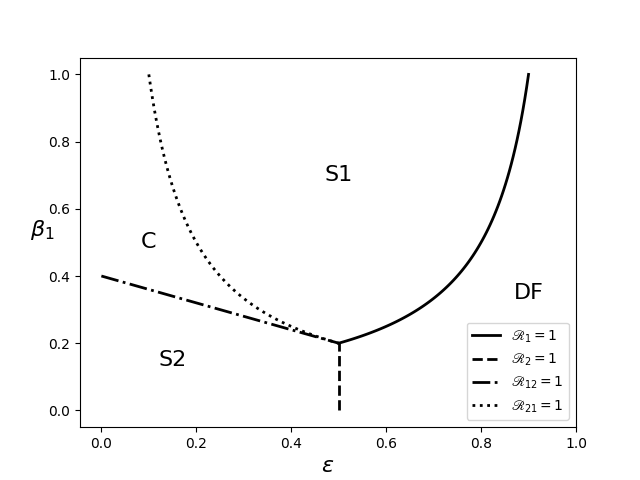}
         \caption{\footnotesize Bifurcation diagram for basic separated immunity model \eqref{SIR2}}
     \end{subfigure}
     \hfill
     \begin{subfigure}[h]{0.45\textwidth}
         \centering
         \includegraphics[width=\textwidth]{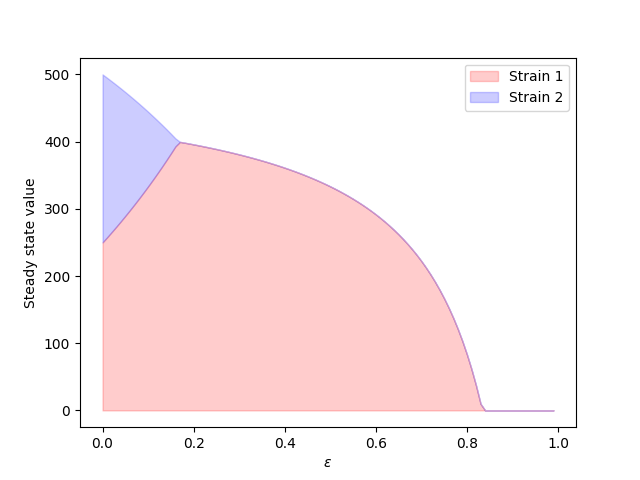}
         \caption{\footnotesize Steady state plot for basic separated immunity model \eqref{SIR2}}
     \end{subfigure}
\caption{\footnotesize In (a) and (c), we present bifurcation plots of the basic integrated immunity model \eqref{SIR1} and basic separated immunity model \eqref{SIR2}, respectively, over $0 \leq \beta_1 \leq 1$ and $0 \leq \epsilon \leq 1$ for the values $\beta_2 = 0.2$, $\gamma = 0.1$, and $\alpha = 0.1$. The four qualitatively distinct regions in Definition \ref{behavior} are given: DF - disease-free behavior; S1 - strain 1 only behavior; S2 - strain 2 only behavior; and C - co-existence behavior. In (b) and (d), we present plots of the stable steady state values where the total infection ($I_1+I_2$, upper curve) is broken down by Strain 1 ($I_1$, shaded red) and Strain 2 ($I_2$, shaded blue) as a function of the effective vaccination level of the population ($\epsilon$). The parameter values used for the simulation are $\beta_1 = 0.6$, $\beta_2 = 0.2$, $\gamma=0.1$, $\alpha = 0.1$, and $N=1000$, with initial conditions $S(0) = 980$, $I_1(0)=I_0(0)=10$, $R(0)=0$, and $V(0)=0$ (if necessary) and the ending condition $t=10000$. Note that this corresponds to $\beta_1 = 0.6$ in the bifurcation diagrams (a) and (c). The integrated immunity model \eqref{SIR1} exhibits \emph{co-existence behavior} for $0 \leq \epsilon < \frac{1}{2}$ and \emph{Strain 2 only behavior} for $\frac{1}{2} < \epsilon \leq 1$. The separated immunity model \eqref{SIR2} exhibits \emph{co-existence behavior} for $0 \leq \epsilon < \frac{1}{6}$, \emph{Strain 1 only behavior} for $\frac{1}{6} < \epsilon \leq \frac{5}{6}$, and \emph{disease-free behavior} for $\frac{5}{6} < \epsilon \leq 1$. Note that, in (b), the steady state value for the total infection stays constant at the value of 500 individuals as the effective immunity level rises while, in (d), the total infection decreases. Consequently, vaccination is able to eliminate the disease in the separated model \eqref{SIR2} but only eliminate Strain 1 in the integrated model \eqref{SIR1}.}
\label{fig:1}
\end{figure}

To aid in interpreting the parameter $\lambda$, we consider the systems \eqref{SIR1-DE} and \eqref{SIR2-DE} at the disease-free equilibrium. For \eqref{SIR1-DE} this is given by $S = \frac{\alpha N}{\alpha + \lambda}, R = \frac{\lambda N}{\alpha + \lambda}$ and the rest of the variables equal to zero, while for \eqref{SIR2-DE} it is $S = \frac{\alpha N}{\alpha + \lambda}, V = \frac{\lambda N}{\alpha + \lambda}$ and the rest of the variables equal to zero. In both cases, this justifies introducing a new parameter
\begin{equation}
    \label{epsilon}
\epsilon = \frac{\lambda}{\alpha + \lambda}
\end{equation}
which corresponds to the proportion of the population which is effectively immune to Strain 1 at the disease-free equilibrium. Note that $0 \leq \epsilon \leq 1$ and $\epsilon$ rises monotonically as $\lambda$ rises. The variable $\epsilon$ can consequently act as an effective proxy for $\lambda$. We incorporate \eqref{epsilon} in the models \eqref{SIR1-DE} and \eqref{SIR2-DE} by substituting $\lambda = \frac{\epsilon \alpha}{1 - \epsilon}$.

In Figure \ref{fig:1}, we present bifurcation diagrams and steady state diagrams for the integrated immunity model \eqref{SIR1} and separated immunity model \eqref{SIR2}. The bifurcation diagrams are generated over the regions $0 \leq \beta_1 \leq 1$ and $0 \leq \epsilon \leq 1$ for the parameter values $\beta_2 = 0.2$, $\gamma = 0.1$, and $\alpha = 0.1$. The steady states are numerically computed for fixed values of $0 \leq \epsilon \leq 1$ using the parameter values $\beta_1 = 0.6$, $\beta_2 = 0.2$, $\gamma = 0.1$, $\alpha = 0.1$, and $N=1000$ and initial conditions $S(0) = 980$, $I_1(0)=I_0(0)=10$, $R(0)=0$, and $V(0)=0$ (if necessary) and the ending condition $t=10000$.


To further understand the transitions on the strain-by-strain breakdown of total infection in the integrated immunity model \eqref{SIR1} (Figure \eqref{fig:1}(b)), we substitute these parameter values and $\lambda = \frac{\epsilon \alpha}{1 - \epsilon}$ into the reproduction numbers \eqref{R01} to get
\[
\begin{aligned}
& \mathscr{R}_1 = 6(1-\epsilon), \mathscr{R}_2 = 2, \mathscr{R}_0 = \max \{ 6(1-\epsilon), 2 \}, \mathscr{R}_{12} = \frac{3(1-\epsilon)}{2-\epsilon}, \mathscr{R}_{21} = \frac{7-6\epsilon}{6(1-\epsilon)}.
\end{aligned}
\]
It can be derived that $\mathscr{R}_1 >1$, $\mathscr{R}_2 > 1$, $\mathscr{R}_{12} > 1$, and $\mathscr{R}_{21} > 1$ for $0 \leq \epsilon < \frac{1}{2}$ while $\mathscr{R}_{12} < 1$ and $\mathscr{R}_{21} > 1$ for $\frac{1}{2} < \epsilon \leq 1$. Consequently, the system exhibits \emph{co-existence behavior} when $0 \leq \epsilon < \frac{1}{2}$ and \emph{Strain 2 only behavior} for $\frac{1}{2} < \epsilon \leq 1$.

For the separated immunity model \eqref{SIR1} (Figure \eqref{fig:1}(d)), we substitute the same parameter values into the reproduction numbers \eqref{R02} to get
\[ \small
\begin{aligned}
& \mathscr{R}_1 = 6(1-\epsilon), \mathscr{R}_2 = 2(1-\epsilon), \mathscr{R}_0 = \max \{ 6(1-\epsilon), 2(1-\epsilon) \}, \mathscr{R}_{12} = \frac{3}{2(1-\epsilon)}, \mathscr{R}_{21} = \frac{7 - 6\epsilon}{6}.
\end{aligned}
\]
It can be derived that $\mathscr{R}_1 >1$, $\mathscr{R}_2 > 1$, $\mathscr{R}_{12} > 1$, and $\mathscr{R}_{21} > 1$ for $0 \leq \epsilon < \frac{1}{6}$, $\mathscr{R}_{12} > 1$ and $\mathscr{R}_{21} < 1$ for $\frac{1}{6} < \epsilon \leq \frac{5}{6}$, and $\mathscr{R}_0 = \max \{ 6(1-\epsilon), 2(1-\epsilon) \} < 1$ for $\frac{5}{6} < \epsilon \leq 1$. Consequently, the system exhibits \emph{co-existence behavior} for $0 \leq \epsilon < \frac{1}{6}$, \emph{Strain 1 only behavior} for $\frac{1}{6} < \epsilon \leq \frac{5}{6}$, and \emph{disease-free behavior} for $\frac{5}{6} < \epsilon \leq 1$. 

We observe that that changes in a population's vaccination level can significantly alter the competitive balance between two competing strains of an infection, and that the nature of the immunity provided by vaccination matters in which strain is given an advantage. When the immunity provided by vaccination is integrated with the natural immunity \eqref{SIR1}, increasing vaccination favors the more immune-resistance strain ($I_2$). When the immunity provided by vaccination is separated from natural immunity \eqref{SIR2}, however, increasing vaccination favors the more transmissible strain ($I_1$). To investigate the capacity for vaccination to influence the competitive balance between two strains further, in Section \ref{sec:delay} we extend the models \eqref{SIR1} and \eqref{SIR2} to incorporate more realistic waning and partial immunity.

\section{Two-Strain Models with Partial Immune Resistance}
\label{sec:delay}

In the basic models \eqref{SIR1} and \eqref{SIR2}, we assumed that individuals passes directly from a contagious state (either $I_1$ or $I_2$) to a state where they are susceptible to Strain 2 ($R$), and then directly to a state where they are susceptible to both Strain 1 and Strain 2 ($S$). In reality, however, there is some time after infection where individuals are not contagious but not susceptible to infection. Furthermore, studies on COVID-19 have shown that an individual's risk of infection depends on the length of time since vaccination or previous infection and that this time may differ by strain \cite{Dolgin2021,Goldberg2022,PIECHOTTA2022}.


In order to incorporate stages of the strain-specific waning immunity periods into \eqref{SIR1} and \eqref{SIR2}, we imagine that there is a delay after previous infection to the partially immune state where Strain 2 can infect an individual, and another delay from this state to the fully susceptible state where both Strain 1 and 2 can infect an individual. We incorporate these delays as gamma-distributed integral delays. Recall that the gamma distribution with integer shape parameter $b$ and rate parameter $a$ has the form of an Erlang distribution:
\[g^b_a(t) = \frac{a^bt^{b-1}e^{-at}}{(b-1)!}\]
which has mean $\mu = \frac{b}{a}$. We will represent integral time-delayed edges with squiggly, rather than solid, arrows and label with the outgoing edge and delayed incoming distribution. For example, to add a $g^a_b(t)$ distributed delay to the transition
\[
\begin{tikzcd}
A \arrow[rr,"\alpha"] & & B
\end{tikzcd}
\]
we write
\[
\begin{tikzcd}
A \arrow[rr,rightsquigarrow,"\alpha\; (g_a^b)"] & & B.
\end{tikzcd}
\]
This indicates that individuals leave the state $A$ at rate $\alpha$ and enter state $B$ at a $g_a^b(t)$ distributed time later. Analysis of delay-differential equations is notoriously challenging; however, for models with gamma-distributed integral delays, the linear chain trick may be utilized to produce a set of linear ordinary differential equations in a set of new intermediate variables \cite{Smith2010,Hurtado2019}. We provide the details of our application of the linear chain trick in Appendix \ref{sec:3}.

\subsection{Two Strain Integrated Vaccination Model with Delays}


We now update the two-strain integrated and separated immunity vaccination models, \eqref{SIR1} and \eqref{SIR2}, to include partial immune resistance for Strain 2. To accomplish this, we incorporate gamma-distributed delays coming into and out of the partially immune state $R$ to get the following model:

\begin{equation}
    \small
    \label{SIR3}
    \begin{tikzcd}
\mbox{\fbox{\begin{tabular}{c} Susceptible \\($S$)\end{tabular}}} \arrow[ddr,bend right=25,"\beta_1"'] \arrow[dr,"\beta_2"'] \arrow[rr,yshift=0.75ex,rightsquigarrow,"\lambda \; (g_{k\alpha}^{k-r})"] & & \mbox{\fbox{\begin{tabular}{c} Partially \\ Immune \\($R$)\end{tabular}}} \arrow[ll,yshift=-0.75ex,rightsquigarrow,"k\alpha \; (g_{k\alpha}^r)"] \arrow[loop right,rightsquigarrow,pos=0.15,"\lambda \; (g_{k\alpha}^{k-r})"] \arrow[dl, yshift=0.5ex,xshift=-0.5ex,"\beta_2"']\\
 & \mbox{\fbox{\begin{tabular}{c} Strain 2 \\($I_2$)\end{tabular}}} \arrow[ur,yshift=-0.5ex,xshift=0.5ex,rightsquigarrow,xshift=0.5ex,pos=0.1,"\gamma \; (g_{k\alpha}^{k-r})"'] & \\
 & \mbox{\fbox{\begin{tabular}{c} Strain 1 \\($I_1$)\end{tabular}}} \arrow[uur,bend right=25,rightsquigarrow,xshift=0.5ex,"\gamma \; (g_{k\alpha}^{k-r})"'] &
\end{tikzcd}
\end{equation}
where the squiggly arrows indicate edges which are time-delayed. Note that the delays in \eqref{SIR3} are structured with shape parameter $k-r$ and rate parameter $k\alpha$ into state $R$ and shape parameter $r$ and rate parameter $k\alpha$ into state $S$. This implies that individuals wait an average time of $\mu_1 = \frac{k-r}{k\alpha} = (1-\frac{r}{k})\alpha^{-1}$ after leaving the infectious classes to enter the partially immune state, and an average time of $\mu_2 = \frac{r}{k\alpha} = \frac{r}{k} \alpha^{-1}$ after leaving the partially immune state to enter the susceptible class. This gives a total mean time of $\mu_1 + \mu_2 = (1-\frac{r}{k})\alpha^{-1} + \frac{r}{k} \alpha^{-1} = \alpha^{-1}$ after leaving the infectious classes to become fully susceptible again. This is the same expected time as the basic model \eqref{SIR1} which had no delays.

Note that individuals who are transitioning from $R$ to $S$ along the delayed edge are not removed from the population; rather, they are still susceptible to infection from Strain $2$ and therefore may transition to $I_2$ before reaching $S$. Consequently, those ultimately transitioning to $S$ will be reduced by the proportion who become infected during the delay period. We assume that the proportion of individuals who transition from $R$ to $I_2$ instead of $S$ is exponentially distributed with time-dependent rate parameter $\frac{\beta_2}{N} I_2(t)$.

These assumptions give rise to the following system of distributed-delay differential equations:
\begin{equation}
    \left\{ \; \; \;
    \label{SIR3-DE}
    \begin{aligned}
        \frac{dS}{dt} & = - \frac{\beta_1}{N} S I_1 - \frac{\beta_2}{N} S I_2 - \lambda S + \Pi(t)\\
        \frac{dI_1}{dt} & = \frac{\beta_1}{N} SI_1 - \gamma I_1\\
        \frac{dI_2}{dt} & = \frac{\beta_2}{N} (S+R)I_2 - \gamma I_2\\
        \frac{dR}{dt} & = \Gamma(t) - \frac{\beta_2}{N} R I_2 - \lambda R - \Pi(t)\\
    \end{aligned}
    \right.
\end{equation}
where
\begin{equation}
    \label{Gamma}
    \begin{aligned}
    \Gamma(t) & = \int_{-\infty}^t [\lambda (S(\tau) + R(\tau)) + \gamma(I_1(\tau) + I_2(\tau))] g_{k\alpha}^{k-r}(t-\tau)\; d\tau,\\
    \Pi(t) & = \int_{-\infty}^t \Gamma(\tau) e^{-\frac{\beta_2}{N} \int_{\tau}^t I_2(s) \; ds} e^{-\lambda(t-\tau)} g_{k\alpha}^r(t- \tau) \; d\tau.
    \end{aligned}
\end{equation}


The system of delay differential equations \eqref{SIR3-DE} with \eqref{Gamma} is challenging to analyzing directly.  Applying the linear chain trick to \eqref{SIR3}, however, we arrive at the following equivalent delay-free model:
\begin{equation}
    \tiny
    \label{SIR4}
    \begin{tikzcd}
\mbox{\fbox{\begin{tabular}{c} Susceptible \\($S_0$)\end{tabular}}} \arrow[ddrrr,bend right=25,"\beta_1"' font=\tiny] \arrow[drrr,bend right = 15,"\beta_2"' font=\tiny] \arrow[rrrrrr,bend left = 30,"\lambda" font=\tiny] & \mbox{\fbox{\begin{tabular}{c} Partially \\ Immune \\($S_1$)\end{tabular}}}  \arrow[l,"k\alpha"' font=\tiny] \arrow[drr,bend right = 10,"\beta_2"' font=\tiny] \arrow[rrrrr,bend left = 25,"\lambda" font=\tiny] & \cdots \arrow[l,"k\alpha"' font=\tiny] & \mbox{\fbox{\begin{tabular}{c} Partially \\ Immune \\($S_r$)\end{tabular}}}  \arrow[l,"k\alpha"' font=\tiny] \arrow[d, "\beta_2" font=\tiny] \arrow[rrr,bend left = 20,"\lambda" font=\tiny] & \cdots \arrow[l,"k\alpha"' font=\tiny] & \mbox{\fbox{\begin{tabular}{c} Partially \\ Immune \\($S_{k-1}$)\end{tabular}}} \arrow[l,"k\alpha"' font=\tiny] & \mbox{\fbox{\begin{tabular}{c} Partially \\ Immune \\($S_k$)\end{tabular}}} \arrow[l,"k\alpha"' font=\tiny] \\
 & & & \mbox{\fbox{\begin{tabular}{c} Strain 2 \\($I_2$)\end{tabular}}} \arrow[urrr,bend right=15,"\gamma"' font=\tiny] & & & \\
 & & & \mbox{\fbox{\begin{tabular}{c} Strain 1 \\($I_1$)\end{tabular}}} \arrow[uurrr,bend right = 25,"\gamma"' font=\tiny] & & &
\end{tikzcd}
\end{equation}
where $0 \leq r \leq k$. Note that the model \eqref{SIR4} contains \eqref{SIR1} as a special case when $r=k=1$. This system can be corresponded to the following system of ordinary differential equations:
\begin{equation}
\label{SIR4-DE}
    \left\{ \; \; \;
    \begin{aligned}
        \frac{dS_0}{dt} & = -\frac{\beta_1}{N} S_0 I_1 -\frac{\beta_2}{N} S_0 I_2 + k\alpha S_1 - \lambda S_0 &&  \\
        \frac{dS_i}{dt} & =  -\frac{\beta_2}{N} S_i I_2 + k\alpha {S}_{i+1} - k\alpha S_i - \lambda S_i, \; \; \; \; \; \; \; \; \; && i = 1, \ldots, r \\
        \frac{dS_i}{dt} & = k\alpha {S}_{i+1} - k\alpha S_i, && i = r+1, \ldots, k-1 \\
        \frac{dS_k}{dt} & = \gamma (I_1+ I_2) - k\alpha S_k + \lambda \sum_{i=0}^r S_i && \\
        \frac{dI_1}{dt} & = \frac{\beta_1}{N} S_0 I_1 - \gamma I_1&& \\
        \frac{dI_2}{dt} & = \frac{\beta_2}{N} I_2 \sum_{i=0}^r S_i - \gamma I_2.&&
    \end{aligned}
    \right.
\end{equation}

In Appendix \ref{sec:31}, we prove the following.
\begin{theorem}
    \label{thm:3}
    The system of distributed delay differential equations \eqref{SIR3-DE} is equivalent to the system of ordinary differential equations \eqref{SIR4-DE} with the following variable substitutions:
\[
    \begin{aligned}
        S_0(t) & = S(t), \\
        S_i(t) & = \frac{1}{k\alpha} \int_{-\infty}^t \Gamma(\tau) e^{-\frac{\beta_2}{N} \int_{\tau}^t I_2(s) \; ds} e^{-\lambda(t-\tau)} g_{k\alpha}^{r-i+1}(t-\tau) \; d\tau, & & i = 1, \ldots, r\\
        S_i(t)  & = \frac{1}{k\alpha} \int_{-\infty}^t [\lambda (S(\tau)+R(\tau)) + \gamma(I_1(\tau)+I_2(\tau))]g_{k\alpha}^{k-i+1}(t-\tau)\; d\tau, & & i = r+1, \ldots, k.\\
    \end{aligned}
\]
where $R(t) = \sum_{i=1}^r S_i(t)$.
\end{theorem}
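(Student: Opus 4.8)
The plan is to verify the stated equivalence by differentiating the integral substitutions directly, using the elementary identities satisfied by the Erlang kernels and letting the ``shape index'' of the kernels telescope. First I would record the facts that do all the work: for $g_a^b(t) = a^b t^{b-1} e^{-at}/(b-1)!$ one has $g_a^1(0) = a$, $g_a^b(0) = 0$ for $b \ge 2$, and
\[
\frac{d}{dt} g_a^b(t) = \begin{cases} -a\, g_a^1(t), & b = 1,\\[2pt] a\big(g_a^{b-1}(t) - g_a^b(t)\big), & b \ge 2. \end{cases}
\]
I would also note the ``consistency'' identities that drop straight out of comparing integrands: with the conventions of the theorem, $\Pi(t) = k\alpha\, S_1(t)$, $\Gamma(t) = k\alpha\, S_{r+1}(t)$, $S_0 = S$, and $R(t) = \sum_{i=1}^r S_i(t)$; also $S + R = \sum_{i=0}^r S_i$.

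Next I would differentiate $S_i(t)$ with the Leibniz rule, in two blocks. For $i = r+1, \dots, k$ the integrand is $h(\tau)\, g_{k\alpha}^{k-i+1}(t-\tau)$ with $h(\tau) = \lambda(S(\tau)+R(\tau)) + \gamma(I_1(\tau)+I_2(\tau))$; the kernel recurrence telescopes the shape index to give $\frac{dS_i}{dt} = k\alpha S_{i+1} - k\alpha S_i$ for $r+1 \le i \le k-1$, while at $i = k$ the boundary term $g_{k\alpha}^1(0) = k\alpha$ produces exactly $h(t) = \gamma(I_1+I_2) + \lambda\sum_{i=0}^r S_i$, yielding the $S_k$-equation of \eqref{SIR4-DE}. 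For $i = 1, \dots, r$ the integrand carries the additional weight $e^{-\frac{\beta_2}{N}\int_\tau^t I_2(s)\,ds}\, e^{-\lambda(t-\tau)}$; its $t$-derivative contributes precisely the $-\frac{\beta_2}{N} S_i I_2$ and $-\lambda S_i$ terms, the kernel recurrence again contributes $k\alpha S_{i+1} - k\alpha S_i$, and for $i = r$ (where the kernel has shape $1$) the boundary term supplies $\Gamma(t) = k\alpha S_{r+1}$ — exactly the ``$k\alpha S_{r+1}$'' in the $i = r$ line of \eqref{SIR4-DE}. Finally, substituting $\Pi(t) = k\alpha S_1$ into the $S$-equation of \eqref{SIR3-DE} gives the $S_0$-equation, the $I_1$ and $I_2$ equations match after writing $S + R = \sum_{i=0}^r S_i$, and summing the $S_i$-equations over $1 \le i \le r$ and re-using the consistency identities recovers the $R$-equation of \eqref{SIR3-DE}, confirming that $R = \sum_{i=1}^r S_i$ is internally consistent.

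For the converse direction I would argue by uniqueness: given a solution of \eqref{SIR4-DE} on $[0,\infty)$ (bounded, say, so the improper integrals converge and may be differentiated under the integral sign, which follows from local boundedness together with the exponential decay of $g_{k\alpha}^b$ and of $e^{-\lambda(t-\tau)}$), with initial data chosen so the integral formulas of the theorem hold at $t=0$ — equivalently, with an admissible past history for \eqref{SIR3-DE} — the functions defined by those same formulas solve the $S_i$-linear subsystem with identical forcing and identical value at $t = 0$; since that subsystem has a unique solution, the two solutions coincide, and conversely the solution of \eqref{SIR3-DE} maps to the solution of \eqref{SIR4-DE} via the substitutions. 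I would include the routine verification that $\Gamma$, $\Pi$ and each $S_i$ are finite and $C^1$ under these hypotheses.

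The main obstacle I anticipate is bookkeeping rather than conceptual: keeping the shape-index shifts straight across the two coupled Erlang chains — the ``$S_k \to \cdots \to S_{r+1} \to S_r$'' chain fed by $\Gamma$ and the ``$S_r \to \cdots \to S_1 \to S_0$'' chain fed by $\Pi$ — and making sure the boundary terms at $\tau = t$ land on exactly the two stages ($i = k$ and $i = r$) whose kernel has shape $1$. The one genuinely non-cosmetic point is the time-dependent survival weight $e^{-\frac{\beta_2}{N}\int_\tau^t I_2(s)\,ds}$: its $t$-derivative is $-\frac{\beta_2}{N} I_2(t)$ times the weight itself, which is precisely what makes the Strain-2 leakage terms of \eqref{SIR4-DE} appear stage by stage; getting this term's sign and placement right in every $S_i$ with $1 \le i \le r$, and confirming it is absent for $i > r$, is the crux.
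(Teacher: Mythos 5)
Your proposal is correct and follows essentially the same route as the paper's proof: apply the Leibniz rule to each substitution, use the Erlang kernel identities $g_{k\alpha}^1(0)=k\alpha$, $g_{k\alpha}^b(0)=0$ and the derivative recurrence to telescope the shape index, and recover the $S_0$, $R$, $I_1$, $I_2$ equations via $\Pi = k\alpha S_1$, $\Gamma = k\alpha S_{r+1}$, and $S+R=\sum_{i=0}^r S_i$. Your added remarks on the converse direction via uniqueness and on justifying differentiation under the integral sign go slightly beyond what the paper writes down, but they do not change the argument.
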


In Section 2.1 of the Supplementary Material, we prove that the system \eqref{SIR4-DE} has the following reproduction numbers:
\begin{equation} \small
    \label{R05}
    \begin{aligned}
    \mathscr{R}_1 & = \frac{\beta_1}{\gamma} \left(\frac{k\alpha}{k\alpha + (k - r)\lambda}\right) \left(\frac{k\alpha}{k\alpha + \lambda}\right)^r, \mathscr{R}_2 = \frac{\beta_2}{\gamma} \left(\frac{k\alpha}{k\alpha + (k - r)\lambda}\right), \mathscr{R}_0 = \max \{ \mathscr{R}_1, \mathscr{R}_2 \},\\
    \mathscr{R}_{12} & = \frac{\beta_1}{\beta_2} \left( \frac{(k - r)\gamma + k \alpha}{(k - r - 1) \gamma + k \alpha + \beta_2 + \lambda} \right)^r,\\
    \mathscr{R}_{21} & = \frac{\beta_2}{\beta_1} \left( \frac{ (\gamma\lambda (k - r) + (\beta_1 + \lambda) k\alpha) (k\alpha + \lambda)^r  - \beta_1 (k\alpha)^{r+1} }{ \gamma (k\alpha + (k - r)\lambda) (k\alpha + \lambda)^r + (\lambda - \gamma) (k\alpha)^{r+1} } \right).
    \end{aligned}
\end{equation}



\subsection{Two Strain Separated Vaccination Model with Delays}

We can also introduce partial immunity through integrated delays to the basic two-strain with separate vaccine immunity model \eqref{SIR2}. This gives the following model:

\begin{equation}
    \small
    \label{SIR5}
    \begin{tikzcd}
    & \mbox{\fbox{\begin{tabular}{c} Vaccinated \\($V$)\end{tabular}}} \arrow[dl,yshift=-0.5ex,xshift=0.5ex,"\alpha"] & \\
    \mbox{\fbox{\begin{tabular}{c} Susceptible \\($S$)\end{tabular}}} \arrow[ddr,bend right=25,"\beta_1"'] \arrow[dr,"\beta_2"] \arrow[ur,yshift=0.5ex,xshift=-0.5ex,"\lambda"] & & \mbox{\fbox{\begin{tabular}{c} Partially \\ Immune \\($R$)\end{tabular}}} \arrow[ll,rightsquigarrow,"k\alpha \; (g_{k\alpha}^{r})"] \arrow[ul,"\lambda"'] \arrow[dl, yshift=0.5ex,xshift=-0.5ex,"\beta_2"']\\
 & \mbox{\fbox{\begin{tabular}{c} Strain 2 \\($I_2$)\end{tabular}}} \arrow[ur,yshift=-0.5ex,xshift=0.5ex,pos=0.1,rightsquigarrow,"\gamma \; (g_{k\alpha}^{k-r})"'] & \\
 & \mbox{\fbox{\begin{tabular}{c} Strain 1 \\($I_1$)\end{tabular}}} \arrow[uur,bend right=25,rightsquigarrow,"\gamma \; (g_{k\alpha}^{k-r})"']  &
\end{tikzcd}
\end{equation}

Notice that, for the model \eqref{SIR5}, after entering the temporarily immune state by either vaccination (i.e. entering the class $V$) or recovering from infection (i.e. entering the class $S_k$), it takes an average of $\alpha^{-1}$ days to return to complete susceptibility (i.e. the class $S_0$). After recovering from infection, however, individuals become susceptible to the immune-resistant Strain 2 on average after $\left(1 - \frac{r}{k} \right)\alpha^{-1}$ days. Also notice that we have left the transition from the Vaccinated class ($V$) to the Susceptible class ($S$) as exponentially-distributed, rather than gamma-distributed. We have kept this transition for simplicity but note that, since the expected waning immunity time remains the same in both cases, the steady states and consequently the basic and strain-specific reproduction numbers are not affected.

The model \eqref{SIR5} can be corresponded to the following system of delay differential equations:
\begin{equation}
    \left\{ \; \; \;
    \label{SIR5-DE}
    \begin{aligned}
        \frac{dS}{dt} & = - \frac{\beta_1}{N} S I_1 - \frac{\beta_2}{N} S I_2 - \lambda S + \alpha V + \Pi(t)\\
        \frac{dI_1}{dt} & = \frac{\beta_1}{N} SI_1 - \gamma I_1\\
        \frac{dI_2}{dt} & = \frac{\beta_2}{N} (S+R)I_2 - \gamma I_2\\
        \frac{dR}{dt} & = \Gamma(t) - \frac{\beta_2}{N} R I_2 - \lambda R - \Pi(t)\\
        \frac{dV}{dt} & = \lambda (S + R) - \alpha V
    \end{aligned}
    \right.
\end{equation}
where
\begin{equation}
    \label{Gamma2}
    \begin{aligned}
    \Gamma(t) & = \int_{-\infty}^t \gamma(I_1(\tau) + I_2(\tau)) g_{k\alpha}^{k-r}(t-\tau)\; d\tau,\\
    \Pi(t) & = \int_{-\infty}^t \Gamma(\tau) e^{-\frac{\beta_2}{N} \int_{\tau}^t I_2(s) \; ds} e^{-\lambda(t-\tau)} g_{k\alpha}^r(t- \tau) \; d\tau.
    \end{aligned}
\end{equation}

We can apply linear chain trick to \eqref{SIR5-DE} to expand the gamma-distributed integral delays \eqref{Gamma2} into linear exponentially-distributed delays to get the following model:

\begin{equation}
    \tiny
    \label{SIR6}
    \begin{tikzcd}
    & & & \mbox{\fbox{\begin{tabular}{c} Vaccinated \\($V$)\end{tabular}}} \arrow[dlll,bend right = 15, yshift=1ex,"\alpha"' font=\tiny] & & & \\
\mbox{\fbox{\begin{tabular}{c} Susceptible \\($S_0$)\end{tabular}}} \arrow[ddrrr,bend right=25,"\beta_1"' font=\tiny] \arrow[drrr,bend right = 15,"\beta_2"' font=\tiny] \arrow[urrr,bend left = 15,yshift=-1ex,"\lambda"' font=\tiny] & \mbox{\fbox{\begin{tabular}{c} Partially \\ Immune \\($S_1$)\end{tabular}}}  \arrow[l,"k\alpha"' font=\tiny] \arrow[drr,bend right = 10,"\beta_2"' font=\tiny] \arrow[urr,bend left = 10, "\lambda"' font=\tiny] & \cdots \arrow[l,"k\alpha"' font=\tiny] & \mbox{\fbox{\begin{tabular}{c} Partially \\ Immune \\($S_r$)\end{tabular}}}  \arrow[l,"k\alpha"' font=\tiny] \arrow[d, "\beta_2" font=\tiny] \arrow[u,"\lambda"' font=\tiny] & \cdots \arrow[l,"k\alpha"' font=\tiny] & \mbox{\fbox{\begin{tabular}{c} Partially \\ Immune \\($S_{k-1}$)\end{tabular}}} \arrow[l,"k\alpha"' font=\tiny] & \mbox{\fbox{\begin{tabular}{c} Partially \\ Immune \\($S_k$)\end{tabular}}} \arrow[l,"k\alpha"' font=\tiny] \\
 & & & \mbox{\fbox{\begin{tabular}{c} Strain 2 \\($I_2$)\end{tabular}}} \arrow[urrr,bend right=15,"\gamma"' font=\tiny] & & & \\
 & & & \mbox{\fbox{\begin{tabular}{c} Strain 1 \\($I_1$)\end{tabular}}} \arrow[uurrr,bend right = 25,"\gamma"' font=\tiny] & & &
\end{tikzcd}
\end{equation}
The model \eqref{SIR6} can be corresponded to the following system of ordinary differential equations:
\begin{equation}
    \left\{ \; \; \;
    \label{SIR6-DE}
    \begin{aligned}
        \frac{dS_0}{dt} & = -\frac{\beta_1}{N} S_0 I_1 -\frac{\beta_2}{N} S_0 I_2 + k\alpha S_1 - \lambda S_0 + \alpha V&&  \\
        \frac{dS_i}{dt} & =  -\frac{\beta_2}{N} S_i I_2 + k\alpha {S}_{i+1} - k\alpha S_i - \lambda S_i, \; \; \; \; \; \; \; \; \; && i = 1, \ldots, r \\
        \frac{dS_i}{dt} & = k\alpha {S}_{i+1} - k\alpha S_i, && i = r+1, \ldots, k-1 \\
        \frac{dS_k}{dt} & = \gamma (I_1+ I_2) - k\alpha S_k && \\
        \frac{dI_1}{dt} & = \frac{\beta_1}{N} S_0 I_1 - \gamma I_1&& \\
        \frac{dI_2}{dt} & = \frac{\beta_2}{N} I_2 \sum_{i=0}^r S_i - \gamma I_2&&\\
        \frac{dV}{dt} & = \lambda \sum_{i=0}^r S_i  - \alpha V.
    \end{aligned}
    \right.
\end{equation}

In Appendix \ref{sec:32} we prove the following.
\begin{theorem}
    \label{thm:4}
    The system of distributed delay differential equations \eqref{SIR5-DE} is equivalent to the system of ordinary differential equations \eqref{SIR6-DE} with the following variable substitutions:
\[
    \begin{aligned}
        S_0(t) & = S(t) \\
        S_i(t) & = \frac{1}{k\alpha} \int_{-\infty}^t \Gamma(\tau) e^{-\frac{\beta_2}{N} \int_{\tau}^t I_2(s) \; ds} e^{-\lambda(t-\tau)} g_{k\alpha}^{r-i+1}(t-\tau) \; d\tau & & i = 1, \ldots, r\\
        S_i(t)  & = \frac{1}{k\alpha} \int_{-\infty}^t \gamma(I_1(\tau)+I_2(\tau)) g_{k\alpha}^{k-i+1}(t-\tau)\; d\tau & & i = r+1, \ldots, k\\
    \end{aligned}
\]
where $R(t) = \sum_{i=1}^r S_i(t)$.
\end{theorem}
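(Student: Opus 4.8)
The plan is to verify directly that the integral substitutions carry solutions of the distributed‑delay system \eqref{SIR5-DE}--\eqref{Gamma2} to solutions of the ordinary differential system \eqref{SIR6-DE}, and conversely, in the same way as the proof of Theorem~\ref{thm:3}. The only analytic tool needed is the elementary calculus of the Erlang kernels: for a rate $a>0$ one has $g_a^1(t) = a e^{-at}$ and
\[
\frac{d}{dt}g_a^b(t) = a\, g_a^{b-1}(t) - a\, g_a^b(t) \quad (b\geq 2), \qquad \frac{d}{dt}g_a^1(t) = -a\, g_a^1(t),
\]
together with the boundary values $g_a^1(0) = a$ and $g_a^b(0) = 0$ for $b\geq 2$; with $a = k\alpha$ these are exactly the identities that power the linear chain trick. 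First I would record that $S_0(t):=S(t)$ by definition and that, setting $R(t)=\sum_{i=1}^r S_i(t)$, one has $S(t)+R(t) = \sum_{i=0}^r S_i(t)$; granting this, the $I_1$-, $I_2$-, and $V$-equations of \eqref{SIR6-DE} coincide term by term with those of \eqref{SIR5-DE}, so there is nothing to prove for those three. It remains to show that the functions $S_0,\dots,S_k$ defined by the substitution satisfy the first four equations of \eqref{SIR6-DE}, and that $R=\sum_{i=1}^r S_i$ then satisfies the $R$-equation of \eqref{SIR5-DE}.

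The core step is to differentiate each integral defining $S_i(t)$ under the integral sign (legitimate given the decay of the integrands as $\tau\to-\infty$ implicit in the model) via Leibniz's rule. Two identifications do most of the work: taking $i=1$ in the formula valid for $i=1,\dots,r$ gives $\Pi(t) = k\alpha\,S_1(t)$, and taking $i=r+1$ in the formula valid for $i=r+1,\dots,k$ gives $\Gamma(t) = k\alpha\,S_{r+1}(t)$. For $i = r+1,\dots,k-1$ the integrand of $S_i$ is $\gamma(I_1+I_2)$ against $g_{k\alpha}^{k-i+1}$ with $k-i+1\geq 2$, so the boundary term vanishes and the kernel recursion yields $\frac{dS_i}{dt} = k\alpha S_{i+1} - k\alpha S_i$; for $i=k$ the shape is $1$, the boundary term is $\gamma(I_1+I_2)$, and one obtains $\frac{dS_k}{dt} = \gamma(I_1+I_2) - k\alpha S_k$. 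For $i = 1,\dots,r$ the integrand carries the extra weight $h(t,\tau) := e^{-\frac{\beta_2}{N}\int_\tau^t I_2(s)\,ds}e^{-\lambda(t-\tau)}$, for which $h(t,t)=1$ and $\partial_t h = -\bigl(\frac{\beta_2}{N}I_2(t)+\lambda\bigr)h$; Leibniz's rule then produces, alongside the chain‑link term $k\alpha S_{i+1} - k\alpha S_i$ coming from the kernel recursion, exactly the loss terms $-\frac{\beta_2}{N}I_2 S_i - \lambda S_i$, and for $i=r$ (shape $1$) the boundary term $\Gamma(t) = k\alpha S_{r+1}$. Matching these against \eqref{SIR6-DE} confirms every $S_i$-equation, and $\Pi(t) = k\alpha S_1(t)$ turns the $S_0$-equation of \eqref{SIR6-DE} into the $S$-equation of \eqref{SIR5-DE}. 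Finally, summing the $S_i$-equations over $i=1,\dots,r$ and telescoping, $k\alpha\sum_{i=1}^r (S_{i+1}-S_i) = k\alpha(S_{r+1}-S_1) = \Gamma(t) - \Pi(t)$, recovers $\frac{dR}{dt} = \Gamma(t) - \frac{\beta_2}{N}R I_2 - \lambda R - \Pi(t)$.

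For the stated equivalence one runs the argument in reverse: given a solution of \eqref{SIR6-DE} with the same decay built into its histories at $-\infty$, the $S_i$ solve linear first‑order equations whose variation‑of‑parameters solution (iterated down the chain $S_k\to S_{k-1}\to\cdots\to S_1$, weighted by $e^{-k\alpha(t-\tau)}$ and, for $i\leq r$, by $h$) is precisely the convolution representation asserted in the theorem; then $S=S_0$, $R=\sum_{i=1}^r S_i$, $\Gamma = k\alpha S_{r+1}$, $\Pi = k\alpha S_1$ are consistent with \eqref{Gamma2}, and the DDE equations follow by reversing the computation above, with uniqueness of solutions reconciling the two state descriptions. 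The one place the argument is not pure bookkeeping — and the point I expect to be the main obstacle — is the nonlinear, time‑dependent survival weight $e^{-\frac{\beta_2}{N}\int_\tau^t I_2(s)\,ds}$ in the kernels for $S_1,\dots,S_r$: one must check that its $t$-derivative feeds back exactly the $-\frac{\beta_2}{N}S_i I_2$ leakage terms (individuals who exit the partially‑immune stages into $I_2$ during the delay), and that the index $i=r$, where the weighted kernel hands off to the plain kernel and $\Pi$ hands off to $\Gamma$, splices correctly. Beyond that, the proof is a direct modification of that of Theorem~\ref{thm:3}: here $\Gamma(t)$ drops the $\lambda(S+R)$ inflow (which in the separated model is routed into the new compartment $V$ rather than back into the immunity chain), and the extra $\alpha V$ term in the $S_0$-equation together with the $V$-equation are simply carried along unchanged.
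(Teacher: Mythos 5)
Your proposal is correct and follows essentially the same route as the paper: differentiate the integral substitutions via the Leibniz rule, use the Erlang kernel identities of the linear chain trick to obtain the chain recursion, identify $\Pi(t)=k\alpha S_1(t)$ and $\Gamma(t)=k\alpha S_{r+1}(t)$, and observe that the only changes from Theorem \ref{thm:3} are that $\Gamma$ drops the $\lambda(S+R)$ inflow (equivalently, $\lambda=0$ in the equations for $i=r+1,\ldots,k$) while the $V$-equation and the $\alpha V$ term are carried along unchanged. Your explicit treatment of the survival weight $e^{-\frac{\beta_2}{N}\int_\tau^t I_2(s)\,ds}e^{-\lambda(t-\tau)}$ and the splice at $i=r$ matches the computation the paper performs in the proof of Theorem \ref{thm:3} and then cites here.
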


In Appendix \ref{sec:22} we compute the steady states of \eqref{SIR6-DE} and use those to derive the following reproduction numbers:
\begin{equation}
    \label{R06}
    \begin{aligned}
    \mathscr{R}_1 & = \frac{\beta_1}{\gamma} \left( \frac{\alpha}{\alpha+\lambda} \right), \mathscr{R}_2 = \frac{\beta_2}{\gamma} \left( \frac{\alpha}{\alpha+\lambda} \right), \mathscr{R}_0  = \max \{ \mathscr{R}_1, \mathscr{R}_2 \},\\
    \mathscr{R}_{12} & = \frac{\beta_1}{\beta_2} \left( \frac{
            (k\alpha (\beta_2 - \gamma) - \gamma \lambda k)
            \left( \frac{
                k\alpha ((k-r) \gamma + k\alpha)
            } {
                (k\alpha)^2
              + k\alpha ((k-r-1) \gamma + \beta_2 + \lambda)
              - \gamma \lambda r
            } \right)^r
          + \lambda ((k-r) \gamma + k\alpha)
        } {
            k\alpha (\beta_2 - \gamma + \lambda) - \gamma \lambda r
        } \right),\\
    \mathscr{R}_{21} & = \frac{\beta_2}{\beta_1} \left( \frac{
             \alpha \beta_1 k(k \alpha)^r
          + (\gamma \lambda r -((\alpha + \gamma) \lambda + \alpha \beta_1) k)
            (\alpha k + \lambda)^r
        } {
            k \gamma(\alpha + \lambda)(k \alpha)^r
          + (\gamma \lambda r-((\alpha + 2 \gamma) \lambda + \alpha \gamma) k )
            (\alpha k + \lambda)^r
        } \right).
    \end{aligned}
\end{equation}

\subsection{Model Analysis Techniques}

Our analysis of the models \eqref{SIR4} and \eqref{SIR6} focus on two features: bifurcation diagrams and steady state diagrams. Both analysis techniques focus on understanding how the competitive balance of two strains changes with respect to changes in a population's effective immunity level.

\paragraph{Population Vaccination Behavior}

The vaccination rate parameter, $\lambda$, is challenging to interpret directly. Instead, we update the parameter $0 \leq \epsilon \leq 1$ introduced in Section \ref{sec:example}. Since this value is attained from the disease-free steady state, it will roughly scale with a population's effective vaccination level (i.e. the proportion of a population who are up-to-date on their vaccinations and for whom the vaccine is effective).

For the integrated immunity model \eqref{SIR4}, we let $\epsilon$ correspond to the proportion of the population at steady state who is immune to both Strain 1 and Strain 2 (i.e. those who are in states $S_{r+1}$ through $S_k$). We therefore consider the equation
\begin{equation}
\label{111}
\epsilon = \frac{1}{N}\sum_{i=r+1}^k S_i^* = \frac{1}{N} \sum_{i=r+1}^k \frac{\lambda N}{k \alpha + (k-r) \lambda} = \frac{(k-r)\lambda}{k \alpha + (k-r)\lambda}
\end{equation}
where the values for $S_i^*$ at the disease-free steady state for \eqref{SIR4-DE} can be found in Appendix \ref{sec:21}. In order to re-parametrize \eqref{SIR4} by $\epsilon$ rather than $\lambda$, we solve \eqref{111} for $\lambda$ to get
\[\lambda = \frac{\epsilon \alpha k}{(1-\epsilon)(k-r)}.\]
For the separated immunity model \eqref{SIR6}, we let $\epsilon$ correspond to the proportion of the population at steady state who is vaccinated (i.e. those who are in state $V$). We therefore consider the equation
\[\epsilon = \frac{1}{N}V^* = \frac{1}{N}\left( \frac{\lambda N}{\alpha + \lambda} \right)= \frac{\lambda }{\alpha + \lambda}\]
which can be solved in terms of $\lambda$ to get
\[\lambda = \frac{\alpha \epsilon}{1 - \epsilon}.\]

\paragraph{Bifurcation Diagrams}

For our bifurcation diagrams, we will focus on the population's effective vaccination level $\epsilon$ and the factors which give each strain its competitive advantage. Since Strain 1 has the advantage of higher transmissibility ($\beta_1 > \beta_2$), the degree of advantage is parametrized by $\beta_1$. Since Strain 2 has the advantage of higher immune resistance, the degree of advantage is parametrized by $r$. We compute bifurcation diagrams in the four qualitatively distinct behaviors in Definition \ref{behavior} for $\epsilon$ versus either $\beta_1$ or $r$. The boundaries are determined by the equations $\mathscr{R}_1 = 1$, $\mathscr{R}_2 = 1$, $\mathscr{R}_{12} = 1$, and $\mathscr{R}_{21} = 1$ determined from \eqref{R05} and \eqref{R06}.

\paragraph{Steady State Diagrams}

In order to represent how a system's long-term behavior depends on the effective vaccination rate $\epsilon$, we plot the stable steady state values where the total infection ($I_1+I_2$, upper curve) is broken down by Strain 1 ($I_1$, shaded red) and Strain 2 ($I_2$, shaded blue) as a function of the effective vaccination level of the population ($\epsilon$). The steady state values of $I_1$, $I_2$, and $I_1+I_2$ are determined numerically by simulating to $t=10000$ the corresponding system with $N=1000$ from the initial condition $I_1(0) = I_2(0) = 10$, $S_0(0) = 980$, $S_i(0) = 0$ for $i=1, \ldots, k$, and $V(0)=0$ (if necessary). We leave the task of establishing explicit results on the parameter regions required for stability of the various steady states as future work.



\section{Discussion}
\label{sec:discussion}

In this section, we investigate how the competitive fitness of strains is impacted by changes in a population's effective vaccination level. 

\subsection{Comparison of Integrated and Separated Immunity}

In Figure \ref{fig:2}, we consider the effect of changing the population's effective immunity ($\epsilon$), the transmissibility of Strain 1 ($\beta_1$), and the degree of immune resistance of Strain 2 ($r$) on the behavior of the integrated and separated immunity models \eqref{SIR4} and \eqref{SIR6}, respectively. In the bifurcation diagrams, we follow the qualitatively distinct regions outlined in Definition \ref{behavior}: DF - Disease-free behavior; S1 - Strain 1-only behavior; S2 - Strain 2-only behavior; and C - Co-existence behavior.

\begin{figure}[t!]
\centering
     \begin{subfigure}[h]{0.3\textwidth}
         \centering
         \includegraphics[width=\textwidth]{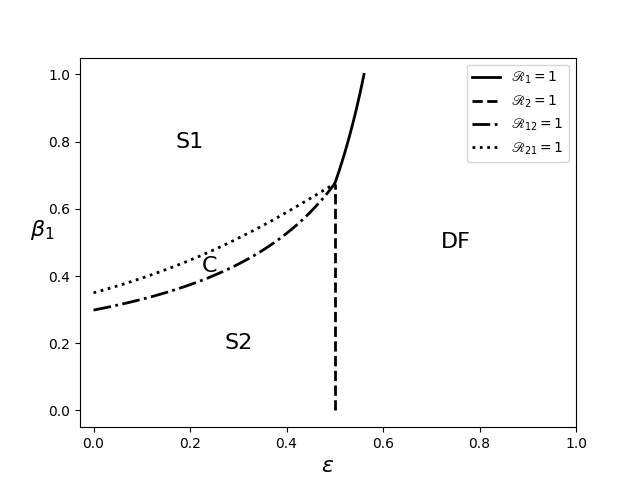}
         \caption{\footnotesize Integrated immunity model \eqref{SIR4-DE} with $0 \leq \beta_1 \leq 1$, $r=3$, and $0 \leq \epsilon \leq 1$.}
     \end{subfigure}
     \hfill
     \begin{subfigure}[h]{0.3\textwidth}
         \centering
         \includegraphics[width=\textwidth]{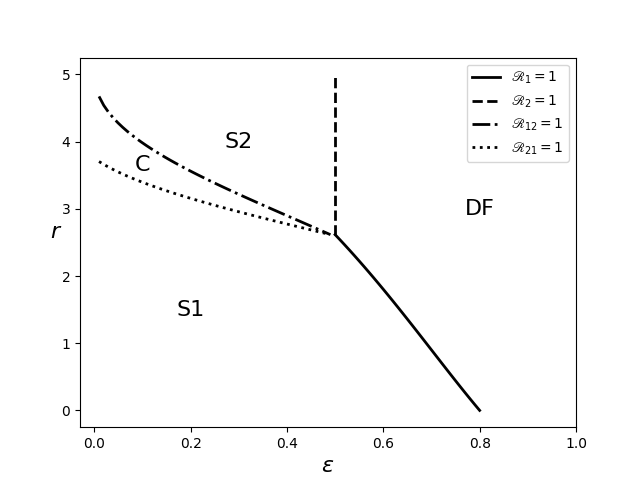}
         \caption{\footnotesize Integrated immunity model \eqref{SIR4-DE} with $\beta_1 = 0.5$, $0 \leq r \leq 5$, and $0 \leq \epsilon \leq 1$.}
     \end{subfigure}
     \hfill
     \begin{subfigure}[h]{0.3\textwidth}
         \centering
         \includegraphics[width=\textwidth]{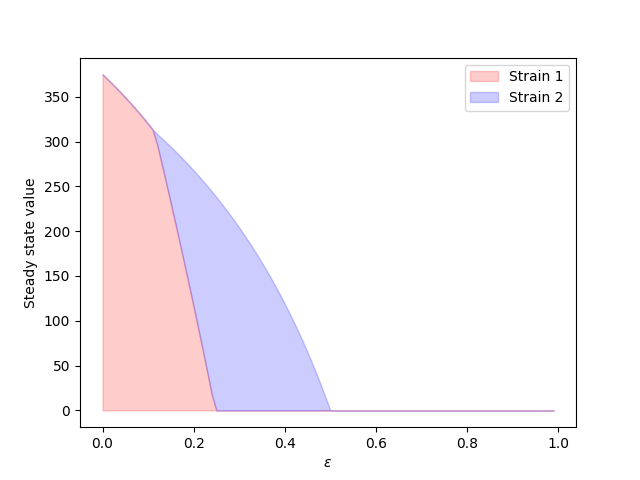}
         \caption{\footnotesize Integrated immunity model \eqref{SIR4-DE} with $\beta_1 = 0.4$, $r=3$, and $0 \leq \epsilon \leq 1$.}
     \end{subfigure}
     \newline
     \begin{subfigure}[h]{0.3\textwidth}
         \centering
         \includegraphics[width=\textwidth]{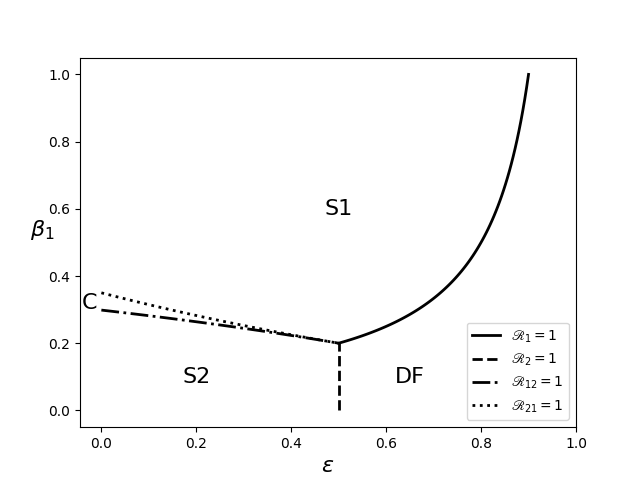}
         \caption{\footnotesize Separated immunity model \eqref{SIR6-DE} with $0 \leq \beta_1 \leq 1$, $r=3$, and $0 \leq \epsilon \leq 1$.}
     \end{subfigure}
     \hfill
     \begin{subfigure}[h]{0.3\textwidth}
         \centering
         \includegraphics[width=\textwidth]{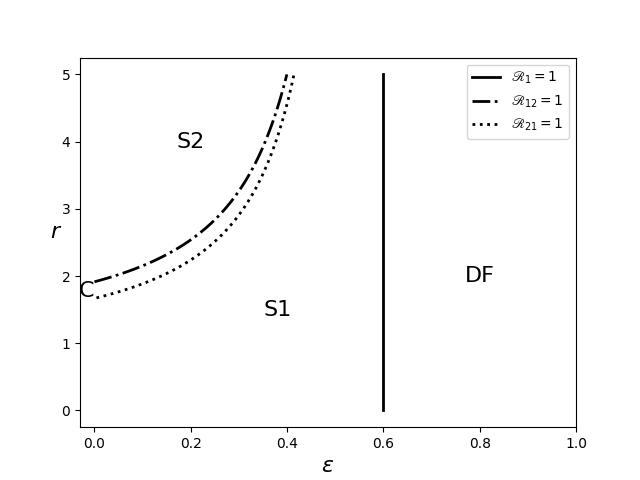}
         \caption{\footnotesize Separated immunity model \eqref{SIR6-DE} with $\beta_1 = 0.3$, $0 \leq r \leq 5$, and $0 \leq \epsilon \leq 1$.}
     \end{subfigure}
     \hfill
     \begin{subfigure}[h]{0.3\textwidth}
         \centering
         \includegraphics[width=\textwidth]{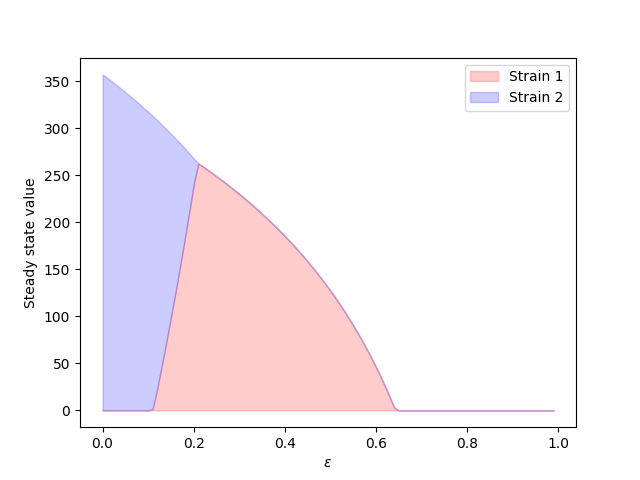}
         \caption{\footnotesize Separated immunity model \eqref{SIR6-DE} with $\beta_1 = 0.28$, $r=3$, and $0 \leq \epsilon \leq 1$.}
     \end{subfigure}
\caption{\footnotesize Bifurcation diagrams and steady states plots for the integrate and separated immunity models \eqref{SIR4} and \eqref{SIR6}, respectively, with $k=5$, $\beta_2=0.2$, $\gamma=0.1$, and $\alpha = 0.1$ (other parameters are indicated above). In the bifurcation diagrams (a), (b), (d), and (e), the regions as defined by Definition \ref{behavior} are labeled Disease-free behavior (\textbf{DF}), Strain 1-only behavior (\textbf{S1}), Strain 2-only behavior (\textbf{S2}), and Co-existence behavior (\textbf{C}). Only the relevant portion of the curves $\mathscr{R}_1 = 1$, $\mathscr{R}_2 = 1$, $\mathscr{R}_{12} = 1$, and $\mathscr{R}_{21}=1$ where the system transitions from one type of behavior to another are shown. Notice that the vertical bifurcation variable is either $\beta_1$ or $r$ which are the variables most closely related to the competitive advantage of Strain 1 and Strain 2, respectively. Specifically, increasing $\beta_1$ increases Strain 1's transmissibility advantage over Strain 2, and increasing $r$ increases Strain 2's immune resistance advantage over Strain 1. In all four bifurcation diagrams the horizontal bifurcation variable is the effective immunity level at the disease-free steady state ($\epsilon$). In (c) and (f), we present plots of the stable steady state values where the total infection ($I_1+I_2$, upper curve) is broken down by Strain 1 ($I_1$, shaded red) and Strain 2 ($I_2$, shaded blue) as a function of the effective vaccination level of the population ($\epsilon$). We note that systems have the capacity to exhibit all four qualitative distinct steady state behaviors outlined in Definition \ref{behavior} as $\epsilon$ is changed. Specifically, for the integrated immunity model we have: (a) Strain-1 only behavior for $0 \leq \epsilon < 0.113$; (b) Co-existence behavior for $0.113 < \epsilon < 0.247$; (c) Strain-2 only behavior for $0.247 < \epsilon < 0.5$; and (d) disease free behavior for $0.5 < \epsilon \leq 1$. For the separated immunity system the order of strain preference is reversed. Specifically, we observe: (a) Strain-2 only behavior for $0 \leq \epsilon < 0.110$; (b) Co-existence behavior for $0.110 < \epsilon < 0.208$; (c) Strain-1 only behavior for $0.208 < \epsilon < 0.643$; and (d) disease free behavior for $0.643 < \epsilon \leq 1$.}
\label{fig:2}
\end{figure}

We make the following observations:
\begin{enumerate}
\item
\textbf{Effect of increasing $\beta_1$:} In Figures \ref{fig:2}(a) and \ref{fig:2}(d) we see that increasing $\beta_1$ transitions both the integrated and separated immunity systems from Strain 2-only behavior (S2) to co-existence behavior (C) to Strain 1-only behavior (S1), or from disease free behavior (DF) to Strain 1-only behavior (S1). Consequently, increasing the transmissibility of the more transmissible strain increases the likelihood of that strain surviving and the more immune resistant strain being eliminated from the population.
\item 
\textbf{Effect of increasing $r$:} In Figures \ref{fig:2}(b) and \ref{fig:2}(e), we see that increasing $r$ transitions both the integrated and separated immunity systems from Strain 1-only behavior (S1) to co-existence behavior (C) to Strain 2-only behavior (S2), or from Strain 1-only behavior (S1) to disease free behavior (DF). Consequently, increasing the degree of immune resistance in the more immune resistant strain increases the likelihood of that strain surviving and eliminating the more transmissible strain from the population.
\item
\textbf{Effect of increasing $\epsilon$ (integrated immunity model \eqref{SIR4}):} In Figures \ref{fig:2}(a) and \ref{fig:2}(b), we see that as $\epsilon$ is increased (i.e. a higher proportion of the population vaccinates) the integrated immunity system ultimately transitions to disease-free behavior. However, we notice that there are certain ranges of $\beta_1$ and $r$ where increasing $\epsilon$ will result in all four qualitatively distinct behaviors being observed, in this order as $\epsilon$ increases: Strain 1-only behavior (S1), to co-existence behavior (C), to Strain 2-only behavior (S2), to disease-free behavior (DF). An illustration of this four-state behavior is given in Figure \ref{fig:2}(c). This suggests that, if vaccine and natural immunity are integrated, vaccination ultimately leads to the elimination of the disease but may also produce a competitive advantage for a more immune resistant strain.
\item
\textbf{Effect of increasing $\epsilon$ (separated immunity model \eqref{SIR6}):} In Figures \ref{fig:2}(d) and \ref{fig:2}(e), we see that as $\epsilon$ is increased the system ultimately transitions to disease-free behavior. We again observe ranges of the parameter $\beta_1$ and $r$ where all four behaviors are observed; however, the order is reversed from the integrated immunity model \eqref{SIR4}: Strain 2-only behavior (S2) to co-existence behavior (C) to Strain-1 only behavior (S1) to disease-free behavior (DF). An illustration of this four-state behavior is given in Figure \ref{fig:2}(f). This suggests that, if vaccine and natural immunity are separated, vaccination ultimately leads to the elimination of the disease but may also produce a competitive advantage for the more transmissible strain. 
\end{enumerate}

\subsection{Complex Behavior}


In general, we observe that for a given model, increasing the effective immunity level of the population ($\epsilon$) favors one strain over the other. Several examples, however, complicate this interpretation and hint at the complexity underlying the strain-over-strain interactions. We observe the following:

\begin{enumerate}
\item
\textbf{Parameter range dependence on which strain benefits from vaccination.} In general we expect that as the effective vaccination level of the population is increased, which strain outcompetes the other will be specific to the model studied rather than the choice of parameter values. However, consider the integrated immunity model \eqref{SIR4} with the parameter values $k=5$, $r=3$, $\beta_2 = 0.2$, $\gamma=0.1$, $\alpha=0.04$, and $N=1000$ (see Figure \ref{fig:3}(a)). We observe that, if $\beta_1 = 0.45$, then Strain 2 outcompetes Strain 1 as $\epsilon$ is increased (see Figure \ref{fig:3}(b)), but that if $\beta_1 = 0.7$, Strain 1 outcomes Strain 2 $\epsilon$ is increased (see Figure \ref{fig:3}(c)). Consequently, which strain is conferred an advantage as the population's effective vaccination level is raised depends upon the specific parameter values as well as which model is chosen. The phenomenon of parameter range dependence on which strain outcompetes the other was not observed in the separated immunity model \eqref{SIR6} but this cannot be ruled out.
\item 
\textbf{Non-monotone influence of vaccination on strain-over-strain fitness.} In general, we expect that, as the population's effective vaccination level $\epsilon$ rises, the advantage gained will be monotone in one strain or the other. Specifically, we do not expect that increasing the vaccination level to a certain value gives an advantage to one strain, and then increasing it further would start to give the advantage to the other. However, consider the integrated vaccination immunity model \eqref{SIR4} with the parameter values $k=4$, $r=3$, $\beta_2 = 0.169$, $\gamma = 0.1$, $\alpha=0.04$, and $N=1000$. This produces bifurcation diagram in $\epsilon$ vs. $\beta_1$ given in Figure \ref{fig:3}(d) and the steady state plots given in Figure \ref{fig:3}(e) ($\beta_1 = 0.75$) and Figure \ref{fig:3}(f) ($\beta_1 = 0.9$). We can see that the steady state value of Strain 2 is not monotone with respect to increases in $\epsilon$; rather, it increases and then decreases as $\epsilon$ rises. For the value $\beta_1 = 0.9$, the steady state value rises from below zero to a positive value, and then back below zero, resulting in a transition from Strain 1-only behavior, to co-existence behavior, back to Strain 1-only behavior. Although the proportion of total infection owing to Strain 2 is very low in this co-existence interval, it is still notable that the strain is able to survive only within this restricted range of $\epsilon$.
\end{enumerate}

\begin{figure}[t!]
\centering
     \begin{subfigure}[h]{0.3\textwidth}
         \centering
         \includegraphics[width=\textwidth]{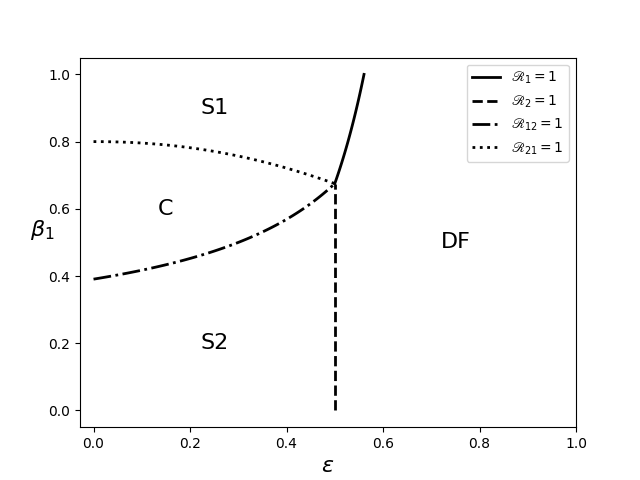}
         \caption{\footnotesize Bifurcation diagram}
     \end{subfigure}
     \hfill
     \begin{subfigure}[h]{0.3\textwidth}
         \centering
         \includegraphics[width=\textwidth]{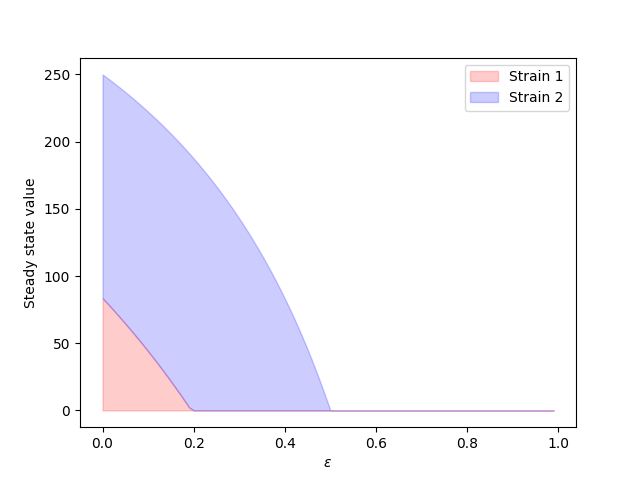}
         \caption{\footnotesize Steady state plot for $\beta_1=0.45$}
     \end{subfigure}
     \hfill
     \begin{subfigure}[h]{0.3\textwidth}
         \centering
         \includegraphics[width=\textwidth]{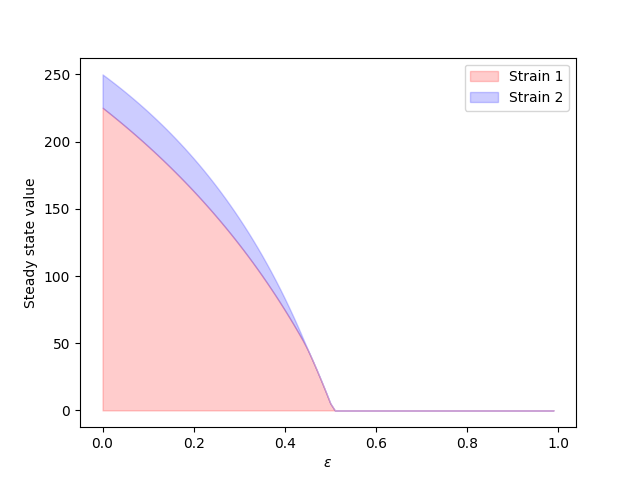}
         \caption{\footnotesize Steady state plot for $\beta_1=0.7$}
     \end{subfigure}
     \newline
          \begin{subfigure}[h]{0.3\textwidth}
         \centering
         \includegraphics[width=\textwidth]{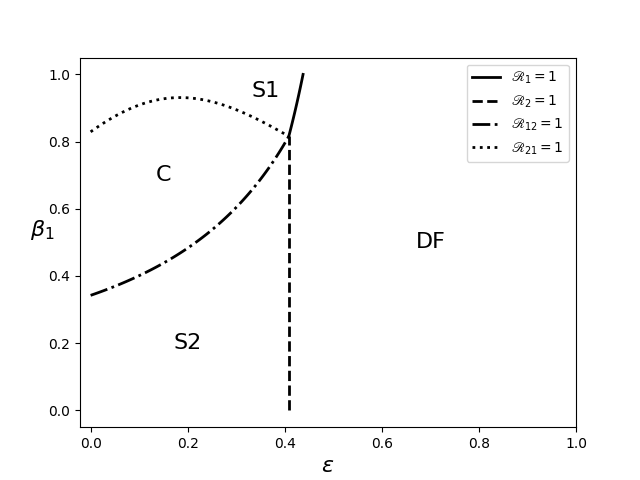}
         \caption{\footnotesize Bifucation diagram}
     \end{subfigure}
     \hfill
     \begin{subfigure}[h]{0.3\textwidth}
         \centering
         \includegraphics[width=\textwidth]{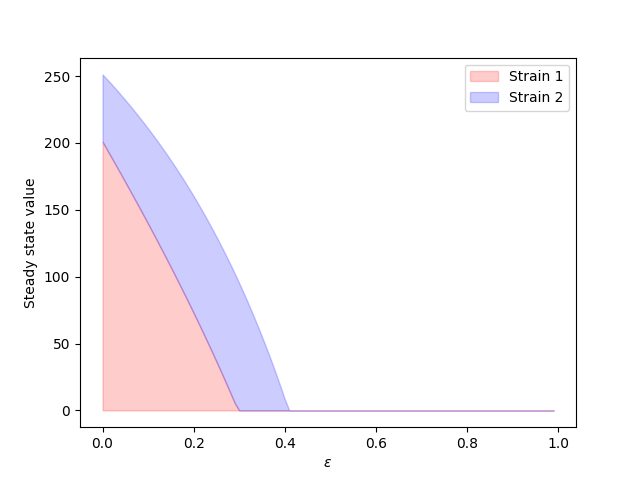}
         \caption{\footnotesize Steady state plot for $\beta_1=0.6$}
     \end{subfigure}
     \hfill
     \begin{subfigure}[h]{0.3\textwidth}
         \centering
         \includegraphics[width=\textwidth]{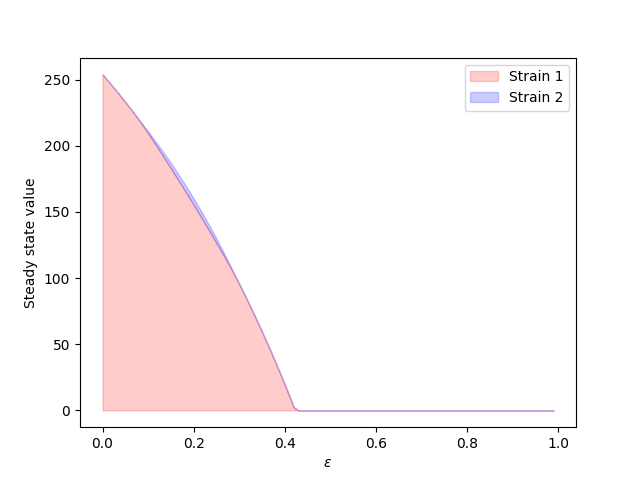}
         \caption{\footnotesize Steady state plot for $\beta_1=0.9$}
         \label{fig:}
     \end{subfigure}
\caption{\footnotesize Bifurcation diagrams and steady states plots for the integrated immunity model \eqref{SIR4} illustrating the complexity of possible behaviors. In the bifurcation diagrams, the regions as defined by Definition \ref{behavior} are labeled Disease-free behavior (\textbf{DF}), Strain 1-only behavior (\textbf{S1}), Strain 2-only behavior (\textbf{S2}), and Co-existence behavior (\textbf{C}). In Figure(a), a bifurcation plot of model \eqref{SIR4-DE} is given for the parameter values $r=3$, $k=5$, $\beta_2 = 0.2$, $\gamma=0.1$, $\alpha=0.04$, and $N=1000$. In (b) and (c), we plot the stable steady state values where the total infection ($I_1+I_2$, upper curve) is broken down by Strain 1 ($I_1$, shaded red) and Strain 2 ($I_2$, shaded blue) for the values $\beta_1 = 0.45$ and $\beta_1 = 0.7$. 
Notice that, in the region supporting co-existence behavior at $\epsilon = 0$ (roughly $0.4 \leq \beta_1 \leq 0.8$), which strain is conferred a competitive advantage as vaccination is increased depends on the value of $\beta_1$. Specifically, in (b) Strain 2 is initially favored and drives out Strain 1 as vaccination is increased while in (c) Strain 1 is initial favored and drives out Strain 2 as vaccination is increased. Notice that the total infection level responds similarly to an increased population immunity level; only the breakdown of the total infection by strains which changes. In (d), we present a bifurcation diagram of the integrated immunity model \eqref{SIR4} with the parameter values $k=4$, $r=3$, $\beta_2 = 0.169$, $\gamma = 0.1$, $\alpha=0.04$, and $N=1000$. In (e) and (f), we present steady state plots for the values $\beta_1 = 0.6$ and $\beta_2 = 0.9$. Note that the steady state value of Strain 2 is not monotone as $\epsilon$ is increased. In particular, for $\beta_2 = 0.9$, the steady state value can rise above zero and then below it as $\epsilon$ is increased, leading to a transition from Strain 1 only behavior, to co-existence behavior, back to Strain 1 only behavior.}
\label{fig:3}
\end{figure}

\subsection{Implications for Public Policy}

The analysis presented has important consequences in how infectious diseases stratified into variants are understood, modeled, and managed.

\begin{enumerate}
    \item \textbf{Vaccination impacts strain-over-strain fitness.} Given an effective and safe vaccine, it seems straight-forward that society should take measures to increase the population's effective vaccination level as that will decrease the prevalence of the disease in the population and, in the best case scenario, eliminate it. Our analysis supports this overall conclusion; however, it also highlights that the manner in which the disease's prevalence is reduced is not necessarily uniform across strains. In particular, it is possible that, as the population's effective vaccine immunity level rises, one strain may see an \emph{increase} in prevalence even as the general disease prevalence declines. This supports the hypothesis that immune-resistant variants of COVID-19, such as Omicron, have arisen due to the selective pressure of vaccination \cite{Kudriavtsev2022}. In such a case, an increase in strain prevalence does not indicate a failure on the part of the vaccination program or that the vaccine is not effective against the newly prevalent strain. Rather, the increase in prevalence results from the shift in selective pressure between the strains caused by vaccination.
    \item \textbf{The relationship between vaccine and natural immunity matters.} Given an effective and safe vaccine, it is tempting to disregard the mechanisms by which vaccine immunity and natural immunity work within the body. Our analysis, however, shows that raising the population's effective immunity level can affect the competition landscape between strains differently depending on how vaccine immunity and natural immunity are related. Specifically, if these immunities are integrated then an increase in a population's effective immunity level with tend to favor the immune-resistant strain; however, if these immunities are modeled separately, it tends to favor the more transmissible strain. Consequently, understanding the relationship between vaccine and natural immunity is essential to anticipating and understanding the population-level consequences of our public policies on vaccination.
    \item \textbf{Strain-specific consequences matter.} The analysis presented in this paper supports the implementation of effective and safe vaccines in containing infectious disease. Given that a population's effective vaccination level influences the strain-over-strain competitive fitness, however, it is conceivable that situations could arise where vaccination may not produce the anticipated benefits and the consequence of strain-specific behavior should be considered. For example, consider the following scenario: (1) vaccine and natural immunity are integrated; (2) it is impossible or impractical to raise the population's effective immunity to the point required for eradication; and (3) the more immune-resistant strain is significantly more virulent than the more transmissible strain. In such a case, it is possible that as a population's effective immunity level increases, the incidence of severe outcomes (e.g. hospitalizations, deaths, etc.) may rise as a higher proportion of infections are due to the more immune-resistant and more virulent strain. Note that this could occur even though the vaccine is effective in reducing the incidence of disease overall. A similar counter-intuitive situation could occur if the immunity due to vaccination is separated from that of previous infection and the more transmissible strain is also more virulent. These results suggest that attention should be paid to the understanding of detailed virus dynamics by policy makers.
\end{enumerate}

\section{Future Work}

We note that there are several limitations of our study. We propose the following as potential avenues for future work.
\begin{enumerate}
    \item {
        We have assumed that vaccine and natural immunity wane at the same rate. Several studies, however, have concluded that natural immunity provides longer protection from re-infection than vaccine immunity \cite{Gazit2021,Shenai2021,Townsend2022}. This assumption could be incorporated in the integrated models \eqref{SIR3} and \eqref{SIR4} by having the vaccination edges lead to a partial immune state between $S_r$ and $S_k$, and in the separated models \eqref{SIR5} and \eqref{SIR6} by having two $\alpha$ parameters for the two distinct notions of immunity.
    }
    \item {
        We have assumed that individuals may choose to vaccinate when they assess that they are first susceptible to the more immune-resistant Strain 2. We have also assumed that vaccination gives individuals the same amount of immune resistance, regardless of their pre-vaccine immunity level. Different decision criteria could be integrated into the model by allowing the outgoing edges for vaccination to be drawn from any ordered subsets of states drawn from $\{ S_0, \ldots, S_k \}$. Similarly, different vaccination effects could be incorporated by allowing the vaccination edges to direct to different states. For example, the assumption that vaccination provides a fixed bump in immunity could be incorporated by having each vaccination edge direct to the partial immunity state a fixed number of states ahead.
    }
    \item {
        We have assumed that vaccine and natural immunity are comparable. Some recent research, however, has suggested that natural immunity provides greater protection against COVID-19 variants than vaccine immunity, theorized to be due to the increased production of memory B-cells and cross-reaction T-cells in the natural immunity response \cite{Kudriavtsev2022}. This potential disparity in immune responses can be incorporated by adjusting the parameters and integration of the partial immunity chains for vaccine and natural immunity.
    }
     \item {
        Studies have suggested that hybrid immunity, resulting from both vaccination and previous infection, is superior to either vaccine or natural immunity by themselves \cite{Goldberg2022}. Our models do not consider this manner of integration of vaccine and natural immunity. Hybrid immunity can be incorporated into our models by considering individual immunity histories.
     }

\end{enumerate}

\paragraph{Acknowledgments}

MDJ, JP, and DAR are supported by National Science Foundation Grant No. DMS-2213390. BP is supported by National Science Foundation Grant No. DMS-2316809.

\begin{appendices}

\section{Reproduction Numbers}
\label{sec:2}

For computing the reproduction numbers of individual strains of an infectious disease, we follow the notation and methods of \cite{VANDENDRIESSCHE2002}.

We first re-index the state vector $\mathbf{x} \in \mathbb{R}_{\geq 0}^n$ so that $i= 1, \ldots, m$, $m \leq n$, corresponds to the infected compartments. For each infectious compartment $i=1, \ldots, m$, we let $\mathscr{F}_i$ denote the rate of new infections, $\mathscr{V}^+_i$ denote the rate of inflow into the compartment by any other means, and $\mathscr{V}^-_i$ denote the rate of outflow from each compartment by any other means. We can therefore write
\[\frac{dx_i}{dt} = \mathscr{F}_i - \mathscr{V}_i, \; i = 1, \ldots, m\]
where $\mathscr{V}_i = \mathscr{V}^-_i - \mathscr{V}^+_i$. Let
\[F = \left[ \frac{d\mathscr{F}_i}{dx_j} \right], \; i, j = 1, \ldots, m \]
and
\[V = \left[ \frac{d\mathscr{V}_i}{dx_j} \right], \; i, j = 1, \ldots, m \]
denote the Jacobians of $\mathscr{F} = [\mathscr{F}_1, \ldots, \mathscr{F}_m ]$ and $\mathscr{V} = [\mathscr{V}_1, \ldots, \mathscr{V}_m ]$, respectively, restricted to only infectious compartments. The next generation matrix is given by $FV^{-1}$. The basic reproduction number of the infectious disease is given by the spectral radius of the next generation matrix and evaluated the disease free steady state $\mathbf{x}_0$, i.e. $\mathscr{R}_0 = \rho(FV^{-1}(\mathbf{x}_0))$. Note that our notation is slightly modified from \cite{VANDENDRIESSCHE2002} since they define the matrices $F$ and $V$ to be those evaluated at the disease-free steady state $\mathbf{x}_0$ while we only evaluate at the steady state after computing the next-generation matrix $FV^{-1}$. This modification is made in order to give a common framework for strain-specific reproduction numbers, which requires evaluating at different steady states than the disease-free state.

The next generation method can be extended to derive strain-specific reproduction numbers in a natural way. We note that, for example, when considering infection by Strain $1$ specifically, uninfected and Strain $2$ infected states can both be considered uninfected. Consequently, when computing the basic reproduction number of Strain $1$, we may define the infectious compartments $i = 1, \ldots, m,$ to be only those resulting from infection with Strain $1$ and treat those infected with Strain $2$ as uninfected. The basic reproduction number of Strain 1, $\mathscr{R}_1$, is the spectral radius of the resulting next generation matrix evaluated at the disease-free steady state (i.e. $\mathscr{R}_1 = \rho(FV^{-1}(\mathbf{x}_0))$, while the basic reproduction number of Strain $1$ in the presence of Strain $2$, $\mathscr{R}_{12}$, is the spectral radius of this next generation matrix evaluated at the Strain 2 only steady state (i.e. $\mathscr{R}_{12} = \rho(FV^{-1}(\mathbf{x}_1))$). Collectively, we refer to $\mathscr{R}_1$ and $\mathscr{R}_{12}$ as the Strain 1-specific reproduction numbers and $\mathscr{R}_2$ and $\mathscr{R}_{21}$ as the Strain 2-specific reproduction numbers. 


We now apply these techniques to the models \eqref{SIR4} and \eqref{SIR6}. Notice that the rate equations for $I_1$ and $I_2$ are the same in the system of differential equations for the integrated mode \eqref{SIR4-DE} and the separated model \eqref{SIR6-DE}. Consequently, up to evaluating at the model-specific steady states, we have the same next generation matrix. For Strain 1, we have
\begin{equation}
    \label{x0-dfe-R1}
    \mathscr{F} = \frac{\beta_1}{N} S_0 I_1, \mathscr{V} = \gamma I_1, F = \frac{\beta_1}{N} S_0, V = \gamma, \mbox{ and } FV^{-1} = \frac{\beta_1 S_0}{\gamma N}
\end{equation}
where we have omitted matrix notation for simplicity since all of the matrices are $1 \times 1$. For Strain 2, we have
\begin{equation}
    \label{x0-dfe-R2}
    \mathscr{F} = \frac{\beta_2}{N} I_2 \sum_{i=0}^r S_i, \mathscr{V} = \gamma I_2, F = \frac{\beta_2}{N} \sum_{i=0}^r S_i, V = \gamma, \mbox{ and } FV^{-1} = \frac{\beta_2}{\gamma N} \sum_{i=0}^r S_i.
\end{equation}
To compute the reproduction numbers $\mathscr{R}_0$, $\mathscr{R}_1$, $\mathscr{R}_2$, $\mathscr{R}_{12}$, and $\mathscr{R}_{21}$, we evaluate the next generation matrix $FV^{-1}$ \eqref{x0-dfe-R1} or \eqref{x0-dfe-R2} at the relevant steady states. Specifically, for $\mathscr{R}_1$ we evaluate \eqref{x0-dfe-R1} at the disease-free state $\mathbf{x}_0$, for $\mathscr{R}_2$ we evaluate \eqref{x0-dfe-R2} at the disease-free state $\mathbf{x}_0$, for $\mathscr{R}_{12}$ we evaluate \eqref{x0-dfe-R1} at the Strain-2 only steady state $\mathbf{x}_2$, for $\mathscr{R}_{21}$ we evaluate \eqref{x0-dfe-R2} at the Strain-1 only steady state $\mathbf{x}_1$, and for $\mathscr{R}_0$ we take the max of $\mathscr{R}_1$ and $\mathscr{R}_2$.

\subsection{Integrated Immunity Model}
\label{sec:21}

\subsubsection{Strain-Specific Basic Reproduction Numbers ($\mathscr{R}_1$, $\mathscr{R}_2$)}
To determine the reproduction numbers for Strains 1 and Strain 2, we must first solve for the disease-free equilibrium $\mathbf{x}_0$.
By substituting $I_1 = I_2 = 0$ into \eqref{SIR4-DE}, and setting all derivatives to 0, we find that $\mathbf{x}_0$ satisfies the system
\begin{equation}
\label{SIR4-x0}
\begin{aligned}
    \begin{cases}
        0 = k\alpha S_1 - \lambda S_0,
    \\  0 = k\alpha S_{i+1} - (k\alpha + \lambda) S_i,& i = 1, \ldots, r
    \\  0 = k\alpha S_{i+1} - k\alpha S_i,            & i = r+1, \ldots, k-1
    \\  \displaystyle{0 = -k\alpha S_k + \lambda \sum_{i=0}^r S_i.}
    \end{cases}
\end{aligned}
\end{equation}
The first three equations in \eqref{SIR4-x0} can be recursively solved in terms of $S_0$ to get:
\[
\begin{aligned}
    \begin{cases}
      \displaystyle{S_i = \frac{\lambda}{k\alpha} \left(
                \frac{k\alpha + \lambda}{k\alpha}
            \right)^{i-1} S_0,}                      & i = 1, \ldots, r+1
    \\  \displaystyle{S_i = \frac{\lambda}{k\alpha} \left(
                \frac{k\alpha + \lambda}{k\alpha}
            \right)^r S_0,}                          & i = r+1, \ldots, k.
    \end{cases}
\end{aligned}
\]
We can then use the geometric series to deduce that
\begin{equation}
    \label{SIR4-x0-geometric}
    \sum_{i=0}^r S_i = S_0 + \sum_{i=1}^r \frac{\lambda}{k\alpha} \left(
                \frac{k\alpha + \lambda}{k\alpha} \right)^{i-1} S_0 = \left(
                \frac{k\alpha + \lambda}{k\alpha} \right)^{r} S_0.
\end{equation}
From here, the population conservation equation $N = \sum_{i=0}^k S_i$ gives
\begin{align*}
    N = \sum_{i=0}^r S_i + \sum_{i=r+1}^k S_i
        & = \left( \frac{k\alpha + \lambda}{k\alpha} \right)^{r} S_0
        +  \frac{(k-r)\lambda}{k\alpha} \left(
                \frac{k\alpha + \lambda}{k\alpha}
            \right)^r S_0
        \\ & = S_0 \left( \frac{
            k\alpha + (k - r)\lambda
        }{
            k\alpha
        } \right) \left( \frac{
            k\alpha + \lambda
        } {
            k\alpha
        } \right)^r
\end{align*}
from which we can solve for $S_0$ to get
\begin{align}
\label{SIR4-x0-S0}
    S_0 = \frac{
            N k\alpha
        } {
            k\alpha + (k - r)\lambda
        } \left( \frac{
            k\alpha
        } {
            k\alpha + \lambda
        } \right)^r.
\end{align}
Substituting the disease-free steady state \eqref{SIR4-x0-S0} into the next generation matrices \eqref{x0-dfe-R1} and \eqref{SIR4-x0-geometric} gives 
\begin{align*}
    \mathscr{R}_1
        = \frac{\beta_1}{\gamma} \left(
            \frac{k\alpha}{k\alpha + (k - r)\lambda}
        \right) \left(
            \frac{k\alpha}{k\alpha + \lambda}
        \right)^r \mbox{ and } \mathscr{R}_2
        = \frac{\beta_2}{\gamma} \left(\frac{k\alpha}{k\alpha + (k - r)\lambda}\right).
\end{align*}

\subsubsection{Strain 1 Competitive Reproduction Number ($\mathscr{R}_{12}$)}
Next, the strain 1 competitive reproduction number is found at the Strain 2-only equilibrium $\mathbf{x}_2$. We substitute $I_2 > I_1 = 0$ into \eqref{SIR4-DE} and set all derivatives to 0, which results in the system
\begin{equation} \label{x2} \begin{cases}
    0 = \displaystyle {
            -\frac{\beta_2}{N} S_0 I_2
          + k\alpha S_1
          - \lambda S_0
        }
\\  0 = \displaystyle {
            -\frac{\beta_2}{N} S_i I_2
          + k\alpha S_{i+1}
          - k\alpha S_i
          - \lambda S_i
        },                                & i = 1, \ldots, r
\\  0 = k\alpha ( S_{i+1} - S_i),    & i = r+1, \ldots, k-1
\\  0 = \displaystyle{
            \gamma I_2
          - k\alpha S_k
          + \lambda \sum_{i=0}^r S_i
        }
\\  0 = \displaystyle{
            \left( \frac{\beta_2}{N} \sum_{i=0}^r S_i
          - \gamma \right) I_2.
        }
\end{cases} \end{equation}
Once again, there is a recursion in the first two equations, which can be solved explicitly in terms of $S_0$ and $I_2$. The 5th equation of \eqref{x2} can be solved for $\sum_{i=0}^r S_i$ and then substituted into the 4th equation of \eqref{x2} solved for $S_k$. This transforms \eqref{x2} into
\begin{align}
\label{SIR4-x2-eqs}
    \begin{cases}
        S_i = \displaystyle{
                \left(
                    \frac
                        {\beta_2 I_2 + N (\lambda + k\alpha) }
                        {N k\alpha}
                \right)^{i-1} \left(
                    \frac
                        {\beta_2 I_2 + N \lambda}
                        {N k\alpha}
                \right) S_0
            },      & i = 1, \ldots, r+1
    \\  S_i = \displaystyle{
                \frac{\gamma\left( \beta_2 I_2
              +N\lambda \right)}{k\alpha \beta_2} 
            },      & i = r+1, \ldots, k
    \\  \displaystyle{
            \sum_{i=0}^r S_i = \frac{N\gamma}{\beta_2}
        }
    \end{cases}
\end{align}
We can substitute the last two equations of \eqref{SIR4-x2-eqs} into the population conservation equation to get
\begin{align*}
    N & = \sum_{i=0}^k S_i + I_2= \sum_{i=0}^r S_i + \sum_{i=r+1}^k S_i + I_2 = \frac{N\gamma}{\beta_2} + \frac{(k-r)\gamma\left( \beta_2 I_2
              +N\lambda \right)}{k\alpha \beta_2}  + I_2. 
\end{align*}
This can be solved for $I_2$ to show that Strain 2 is endemically present at
\begin{align}
\label{SIR4-x2-I2}
    I_2 = \frac{N}{\beta_2} \left( \frac{
            \beta_2 k\alpha - \gamma ((k - r)\lambda + k\alpha)
        } {
            (k - r)\gamma + k\alpha
        } \right)
\end{align}
Now, to compute $\mathscr{R}_{12}$ we only have to solve for $S_0$. We substitute \eqref{SIR4-x2-I2} into the first equation of \eqref{SIR4-x2-eqs} to get
\[
    S_i = \left(
            \frac{
                \beta_2 + \lambda + (k - r - 1)\gamma + k\alpha
            } {
                (k - r)\gamma + k\alpha
            }
        \right)^{i-1} \left(
            \frac{\beta_2 + \lambda - \gamma}{(k - r)\gamma + k\alpha}
        \right) S_0,
i = 1, \ldots, r+1
\]
which is a recurrence relation in terms of $S_0$ only. Note that the geometric series gives
{ \footnotesize
\begin{align*}
  & \sum_{i=1}^r \left(
        \frac{
            \beta_2 + \lambda + (k - r - 1)\gamma + k\alpha
        } {
            (k - r)\gamma + k\alpha
        }
    \right)^{i-1}
    = \left( \frac{
            (k - r)\gamma + k\alpha
        } {
            \beta_2 + \lambda - \gamma
        } \right) \left(
            \left( \frac{
                \beta_2 + \lambda - \gamma + (k - r)\gamma + k\alpha
            } {
                (k - r)\gamma + k\alpha
            } \right)^r
          - 1
        \right),
\end{align*}
}
so that
\begin{align*}
    \sum_{i=0}^r S_i
     & = S_0 + S_0 \left(
            \frac{\beta_2 + \lambda - \gamma}{(k - r)\gamma + k\alpha}
        \right) \sum_{i=1}^r \left(
            \frac{
                \beta_2 + \lambda + (k - r - 1)\gamma + k\alpha
            } {
                (k - r)\gamma + k\alpha
            }
        \right)^{i-1}
\\  & = S_0 \left( \frac{
            \beta_2 + \lambda - \gamma + (k - r)\gamma + k\alpha
        } {
            (k - r)\gamma + k\alpha
        } \right)^r.
\end{align*}
Since $\sum_{i=0}^r S_i = \displaystyle{\frac{N\gamma}{\beta_2}}$ by the third equation of \eqref{SIR4-x2-eqs}, we have
\begin{align*}
    S_0 = \frac{N\gamma}{\beta_2} \left( \frac{
            (k - r)\gamma + k\alpha
        } {
            \beta_2 + \lambda - \gamma + (k - r)\gamma + k\alpha
        } \right)^r.
\end{align*}
Substituting this value in \eqref{x0-dfe-R1} gives
\begin{align*}
    \mathscr{R}_{12}
        = \frac{\beta_1 S_0}{N\gamma}
        = \frac{\beta_1}{\beta_2} \left( \frac{
            (k - r)\gamma + k\alpha
        } {
            \beta_2 + \lambda - \gamma + (k - r)\gamma + k\alpha
        } \right)^r.
\end{align*}

\subsubsection{Strain 2 Competitive Reproduction Number ($\mathscr{R}_{21}$)}

To determine the strain 2 competitive reproduction numbers, we must first solve for the Strain-1 only equilibrium $\mathbf{x}_1$.
By substituting $I_1 > I_2 = 0$ into \eqref{SIR4-DE}, and setting all derivatives to 0, we find that $\mathbf{x}_1$ satisfies the system
\begin{align}
\label{SIR4-x1}
    \begin{cases}
        \displaystyle{0 = -\frac{\beta_1}{N} S_0 I_1 + k\alpha S_1 - \lambda S_0,}
    \\  0 = k\alpha S_{i+1} - (k\alpha + \lambda)S_i,             & i = 1, \ldots, r
    \\  0 = k\alpha (S_{i+1} - S_i),                              & i = r+1, \ldots, k-1
    \\  \displaystyle{0 = \gamma I_1 - k\alpha S_k + \lambda \sum_{i=0}^r S_i}
    \\  \displaystyle{0 = \left(\frac{\beta_1}{N} S_0 - \gamma \right) I_1.}
    \end{cases}
\end{align}
Since $I_1 > 0$, we can solve for $S_0$ in the 5th equation of \eqref{SIR4-x1} and then recursively derive equations for $S_i$, $i=1, \ldots, k$, in terms of $I_1$ using the remaining four equations of \eqref{SIR4-x1} to get the following:
\[
    \begin{cases}
        \displaystyle{S_0 = \frac{N\gamma}{\beta_1}}
    \\  \displaystyle{S_i = \frac{\gamma\left(
                \beta_1 I_1 + N \lambda
            \right)}{k\alpha\beta_1}  \left(
                \frac{k\alpha + \lambda}{k\alpha}
            \right)^{i-1},}                                & i = 1, \ldots, r
    \\  \displaystyle{S_i = \frac{\gamma\left(
                \beta_1 I_1 + N\lambda
            \right)}{k\alpha\beta_1}   \left(
                \frac{k\alpha + \lambda}{k\alpha}
            \right)^r,}                                    & i = r+1, \ldots, k.
    \end{cases}
\]
We can then use the geometric series to deduce that
\begin{align}
\label{SIR4-x1-geometric}
\sum_{i=1}^r S_i & =  \frac{\gamma\left(
                \beta_1 I_1 + N\lambda
            \right)}{k\alpha\beta_1} \sum_{i=1}^r  \left(
            \frac{k\alpha + \lambda}{k\alpha}
        \right)^{i-1}  = \frac{\gamma\left(
                \beta_1 I_1 + N\lambda
            \right)}{\lambda \beta_1} \left( \left(
            \frac{k\alpha + \lambda}{k\alpha}
        \right)^{r} - 1 \right).
\end{align}
The population conservation equation becomes
\begin{align*}
    N & = \sum_{i=0}^k S_i + I_1  = S_0 + \sum_{i=1}^{r} S_i + \sum_{i=r+1}^k S_i + I_1 \\
      & = \frac{N\gamma}{\beta_1} + \frac{\gamma\left(
            \beta_1 I_1 + N\lambda
        \right)}{\lambda \beta_1} \left( \left(
        \frac{k\alpha + \lambda}{k\alpha}
    \right)^{r} - 1 \right) + \frac{(k-r)\gamma\left(
            \beta_1 I_1 + N\lambda
        \right)}{k\alpha\beta_1}   \left(
            \frac{k\alpha + \lambda}{k\alpha}
        \right)^r + I_1
\end{align*}
which has the solution
\begin{align}
\label{SIR4-x1-I1}
    I_1 = \frac{N\lambda}{\beta_1} \left(
            \frac{
                (\beta_1 + \lambda - \gamma) (k\alpha)^{r+1}
            }{
                (\lambda - \gamma) (k\alpha)^{r+1}
              + \gamma (k\alpha + \lambda(k - r)) (k\alpha + \lambda)^r
            }
          - 1
        \right).
\end{align}
We then substitute \eqref{SIR4-x1-I1} into \eqref{SIR4-x1-geometric} and then \eqref{SIR4-x1-geometric} into $FV^{-1}$ in \eqref{x0-dfe-R2}. Simplifying, this gives
\begin{align*}
    \mathscr{R}_{21} = \frac{\beta_2}{\beta_1} \left(
        \frac{
            (\gamma\lambda (k - r) + (\beta_1 + \lambda) k\alpha) (k\alpha + \lambda)^r
          - \beta_1 (k\alpha)^{r+1}
        }{
            \gamma (k\alpha + (k - r)\lambda) (k\alpha + \lambda)^r
          + (\lambda - \gamma) (k\alpha)^{r+1}
        }
    \right).
\end{align*}

\subsection{Separated Immunity Model}
\label{sec:22}

\subsubsection{Strain-Specific Basic Reproduction Numbers ($\mathscr{R}_1$, $\mathscr{R}_2$)}

To find the disease free equilibrium $\mathbf{x}_0$, we substitute $I_1 = I_2 = 0$ into \eqref{SIR6-DE} to produce
\begin{equation}
    \left\{ \; \; \;
    \label{SIR6-DE-x0}
    \begin{aligned}
        0 & = k\alpha S_1 - \lambda S_0 + \alpha V&&  \\
        0 & = k\alpha {S}_{i+1} - k\alpha S_i - \lambda S_i, \; \; \; \; \; \; \; \; \; && i = 1, \ldots, r \\
        0 & = k\alpha {S}_{i+1} - k\alpha S_i, && i = r+1, \ldots, k-1 \\
        0 & = - k\alpha S_k && \\
        0 & = \lambda \sum_{i=0}^r S_i  - \alpha V.
    \end{aligned}
    \right.
\end{equation}
It follows immediately from the second through fourth equations of \eqref{SIR6-DE-x0} that $S_i = 0$ for $i=1, \ldots, k$. Consequently, from the first and fifth equation of \eqref{SIR6-DE-x0}, we have $V = \frac{\lambda}{\alpha} S_0$. 
The population conservation equation then gives
\begin{align*}
    N   = S_0 + V
        = S_0 + \frac{\lambda}{\alpha} S_0
        = \left( \frac{\lambda + \alpha}{\alpha} \right) S_0
\end{align*}
so that $S_0 = \frac{N\alpha}{\lambda + \alpha}$. 
Substituting this into the next generation matrices $FV^{-1}$ \eqref{x0-dfe-R1} and \eqref{x0-dfe-R2} gives
\begin{align*}
    \mathscr{R}_1 = \frac{\beta_1}{\gamma} \left( \frac{\alpha}{\lambda + \alpha}\right)     \mbox{ and }
    \mathscr{R}_2 = \frac{\beta_2}{\gamma} \left( \frac{\alpha}{\lambda + \alpha}\right).
\end{align*}

\subsubsection{Strain 1 Competitive Reproduction Number ($\mathscr{R}_{12}$)}
To find the Strain 2 only equilibrium, we substitute $I_2 > I_1 = 0$ into \eqref{SIR6-DE} to get 
\begin{equation}
    \left\{ \; \; \;
    \label{SIR6-DE-x2}
    \begin{aligned}
        0 & = -\frac{\beta_2}{N} S_0 I_2 + k\alpha S_1 - \lambda S_0 + \alpha V&&  \\
        0 & =  -\frac{\beta_2}{N} S_i I_2 + k\alpha {S}_{i+1} - k\alpha S_i - \lambda S_i, \; \; \; \; \; \; \; \; \; && i = 1, \ldots, r \\
        0 & = k\alpha {S}_{i+1} - k\alpha S_i, && i = r+1, \ldots, k-1 \\
        0 & = \gamma I_2 - k\alpha S_k && \\
        0 & = \frac{\beta_2}{N} I_2 \sum_{i=0}^r S_i - \gamma I_2&&\\
        0 & = \lambda \sum_{i=0}^r S_i  - \alpha V.
    \end{aligned}
    \right.
\end{equation}
Since $I_2 > 0$, we can solve the fourth equation of \eqref{SIR6-DE-x2} for $\displaystyle{\sum_{i=0}^r S_i}$, then substitute into the fifth equation of \eqref{SIR6-DE-x2} to solve for $V$. Substituting this value  into the first equation of \eqref{SIR6-DE-x2} and inductively using the the first three equations of \eqref{SIR6-DE-x2} to solve for $S_i$, $i=1, \ldots, k$, in terms of $I_2$ gives
\begin{align}
\label{SIR6-x2-eqs}
    \begin{cases}
        S_0 = \displaystyle{
                \frac{
                    \gamma I_2 \left(
                        \frac{\beta_2 I_2 + N(k\alpha + \lambda)}{N k\alpha}
                    \right)^{-r}
                  + \frac{N\lambda\gamma}{\beta_2}
                } {
                    \frac{\beta_2}{N} I_2 + \lambda
                }
            }
    \\  S_i = \displaystyle{
                \frac{\gamma I_2}{k\alpha}
                \left(
                    \frac{\beta_2 I_2 + N(k\alpha + \lambda)}{N k\alpha}
                \right)^{i-1-r}
            },                                        & i = 1, \ldots, r
    \\  S_i = \displaystyle{
                \frac{\gamma I_2}{k\alpha}
            },                                        & i = r+1 ,\ldots, k
    \\  \displaystyle{
            \sum_{i=0}^r S_i = \frac{N\gamma}{\beta_2},
        }
    \\  V = \displaystyle{
                \frac{N\gamma\lambda}{\beta_2\alpha}.
            }
    \end{cases}
\end{align}
We substitute the third through fifth equations in \eqref{SIR6-x2-eqs} into the population conservation to get
\begin{align*}
    N
 = I_2 + \sum_{i=0}^r S_i + \sum_{i=r+1}^k S_i + V
 = I_2 + \frac{N\gamma}{\beta_2}
      + \frac{\gamma(k-r)I_2}{k\alpha} + \frac{N\gamma\lambda}{\beta_2\alpha}
\end{align*}
which can be solved for $I_2$ to give
\begin{align*}
    I_2 = \frac{
        N k (\alpha \beta_2 - (\alpha + \lambda) \gamma)
    } {
        \beta_2 (k\alpha + (k-r) \gamma)
    }.
\end{align*}
We can substitute this into the first equation from \eqref{SIR6-x2-eqs} to get $S_0$, which we can substitute into the next generation matrix $FV^{-1}$ \eqref{x0-dfe-R1} to give
\begin{align*}
    \mathscr{R}_{12}
    & = \frac{\beta_1 S_0}{N \gamma}
      = \frac{\beta_1}{N \gamma} \left(  \frac{\gamma I_2
            \left(
                \frac{\beta_2 I_2 + N(k\alpha + \lambda)}{N k\alpha}
            \right)^{-r}
          + \frac{N\lambda\gamma}{\beta_2}
        } {
            \frac{\beta_2}{N} I_2 + \lambda
        } \right)
\\  & = \frac{\beta_1}{\beta_2} \left( \frac{
            (k\alpha (\beta_2 - \gamma) - \gamma \lambda k)
            \left( \frac{
                k\alpha ((k-r) \gamma + k\alpha)
            } {
                (k\alpha)^2
              + k\alpha ((k-r-1) \gamma + \beta_2 + \lambda)
              - \gamma \lambda r
            } \right)^r
          + \lambda ((k-r) \gamma + k\alpha)
        } {
            k\alpha (\beta_2 - \gamma + \lambda) - \gamma \lambda r
        } \right)
\end{align*}

\subsubsection{Strain 2 Competitive Reproduction Number ($\mathscr{R}_{21}$)}
To find the Strain 1 only equilibrium, we substitute $I_1 > I_2 = 0$ into \eqref{SIR6-DE} to get 
\begin{equation}
    \left\{ \; \; \;
    \label{SIR6-DE-x1}
    \begin{aligned}
        0 & = -\frac{\beta_1}{N} S_0 I_1 + k\alpha S_1 - \lambda S_0 + \alpha V&&  \\
        0 & = k\alpha {S}_{i+1} - k\alpha S_i - \lambda S_i, \; \; \; \; \; \; \; \; \; && i = 1, \ldots, r \\
        0 & = k\alpha {S}_{i+1} - k\alpha S_i, && i = r+1, \ldots, k-1 \\
        0 & = \gamma I_1 - k\alpha S_k, && \\
        0 & = \frac{\beta_1}{N} S_0 I_1 - \gamma I_1,&& \\
        0 & = \lambda \sum_{i=0}^r S_i  - \alpha V.
    \end{aligned}
    \right.
\end{equation}
Since $I_1 > 0$, the fourth equation in \eqref{SIR6-DE-x1} gives the value of $S_0$ which can be substituted into the first equation. The second through fourth equations of \eqref{SIR6-DE-x1} can be inductively used to give $S_i$ in terms of $I_1$ and the fifth equation can be used to solve $V$ in terms of $\sum_{i=0}^r S_i$. This gives
\begin{equation}
\label{SIR6-DE-1}
\begin{aligned}
    \begin{cases}
        S_0 = \displaystyle{
                \frac{N\gamma}{\beta_1}
            }
    \\  S_i = \displaystyle{
                \frac{\gamma I_1}{k\alpha}
                \left(
                    \frac{k\alpha + \lambda}{k\alpha}
                \right)^{i-1-r}
            },                                        & i = 1, \ldots, r
    \\  S_i = \displaystyle{
                \frac{\gamma I_1}{k\alpha}
            },                                        & i = r+1 ,\ldots, k
    \\  V = \displaystyle{
                \frac{\lambda}{\alpha} \sum_{i=0}^r S_i.
            }
    \end{cases}.
\end{aligned}
\end{equation}
Using the first two equations from \eqref{SIR6-DE-1} and the geometric series gives
\begin{equation}
    \label{S0}
\displaystyle{
            \sum_{i=0}^r S_i
                = \frac{\gamma}{\lambda} \left(
                    1
                  - \left(
                        \frac{k\alpha}{k\alpha + \lambda}
                    \right)^r
                \right) I_1
                + \frac{N\gamma}{\beta_1}
        }
\end{equation}
so that
\[V = \displaystyle{
                \frac{\gamma}{\alpha} \left(
                    1
                  - \left(
                        \frac{k\alpha}{k\alpha + \lambda}
                    \right)^r
                \right) I_1
                + \frac{N\lambda\gamma}{\beta_1\alpha}.
            }\]
Substituting these values into the population conservation equation gives
\begin{align*}
    N
    & = I_1 + \sum_{i=0}^r S_i + \sum_{i=r+1}^k S_i + V
\\  & = I_1
      + \frac{\gamma}{\lambda} \left(
            1
          - \left(
                \frac{k\alpha}{k\alpha + \lambda}
            \right)^r
        \right) I_1
      + \frac{N\gamma}{\beta_1}
      + \frac{(k-r)\gamma}{k\alpha} I_1
      + \frac{\gamma}{\alpha} \left(
            1
          - \left(
                \frac{k\alpha}{k\alpha + \lambda}
            \right)^r
        \right) I_1
      + \frac{N\lambda\gamma}{\beta_1\alpha}
\end{align*}
which can be solved for $I_1$ to get
\begin{equation}
\label{eq1}
\begin{aligned}
    I_1 = \frac{N k \lambda}{\beta_1} \left( \frac{
            (\beta_1 \alpha - (\alpha + \lambda) \gamma) (k \alpha + \lambda)^r
        } {
            (k \alpha \lambda + (k - r) \lambda \gamma + k (\alpha + \lambda) \gamma) (k \alpha + \lambda)^r
          - k (\alpha + \lambda) \gamma (k \alpha)^r
        } \right).
\end{aligned}
\end{equation}
Substituting \eqref{eq1} into \eqref{S0}, and then that into next generation matrix \eqref{x0-dfe-R2} gives
\[\begin{aligned}
    \mathscr{R}_{21}
    & = \frac{\beta_2}{\gamma N} \sum_{i=0}^r S_i
      = \frac{\beta_2}{\gamma N} \left(
            \frac{\gamma}{\lambda} \left(
                1
              - \left(
                    \frac{k\alpha}{k\alpha + \lambda}
                \right)^r
            \right) I_1
          + \frac{N\gamma}{\beta_1}
        \right)
\\  & = \frac{\beta_2}{\beta_1} \left( \frac{
             \alpha \beta_1 k(k \alpha)^r
          + (\gamma \lambda r -((\alpha + \gamma) \lambda + \alpha \beta_1) k)
            (\alpha k + \lambda)^r
        } {
            k \gamma(\alpha + \lambda)(k \alpha)^r
          + (\gamma \lambda r-((\alpha + 2 \gamma) \lambda + \alpha \gamma) k )
            (\alpha k + \lambda)^r
        } \right).
\end{aligned}\]

\section{Linear Chain Trick}
\label{sec:3}

In order to prove Theorem \ref{thm:3} and Theorem \ref{thm:4}, we need the following results. The proofs can be found in the indicated literature.

\begin{lemma}[Corollary of Leibniz Integral Rule, \cite{protter1985}]
    \label{leibniz}
    Suppose $f(t,x)$ and $\frac{\partial}{\partial t}f(t,x)$ are continuous everywhere and define $F(t) = \int_{- \infty}^t f(t,s) \; ds$. Then
    \[F'(t) = f(t,t) + \int_{-\infty}^t \frac{\partial}{\partial t} f(t,s) \; ds.\]
\end{lemma}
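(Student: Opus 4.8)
The plan is to reduce the statement to two standard facts---the fundamental theorem of calculus and the classical Leibniz rule for differentiating under the integral sign with \emph{fixed} limits---combined through the multivariable chain rule. First I would introduce the auxiliary two-variable function
\[
  G(t,u) = \int_{-\infty}^{u} f(t,s)\,ds ,
\]
so that $F(t) = G(t,t)$; that is, $F$ is the restriction of $G$ to the diagonal $u=t$. Assuming $G$ is continuously differentiable near the diagonal, the chain rule gives $F'(t) = G_t(t,t) + G_u(t,t)$, where subscripts denote partial derivatives, and it remains to evaluate each term.

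The second term is immediate: by the fundamental theorem of calculus, using that $s\mapsto f(t,s)$ is continuous, $G_u(t,u) = f(t,u)$, hence $G_u(t,t) = f(t,t)$. For the first term I would work on a truncated interval $[-M,u]$, where the hypotheses that $f$ and $f_t$ are continuous justify the elementary Leibniz rule
\[
  \frac{\partial}{\partial t}\int_{-M}^{u} f(t,s)\,ds = \int_{-M}^{u} f_t(t,s)\,ds ,
\]
and then let $M\to\infty$. This yields $G_t(t,u) = \int_{-\infty}^{u} f_t(t,s)\,ds$, hence $G_t(t,t) = \int_{-\infty}^{t} f_t(t,s)\,ds$. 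Substituting both expressions into $F'(t) = G_t(t,t) + G_u(t,t)$ gives exactly the claimed identity. Alternatively, one may argue directly from the difference quotient, writing $F(t+h)-F(t) = \int_{-\infty}^{t}[f(t+h,s)-f(t,s)]\,ds + \int_{t}^{t+h} f(t+h,s)\,ds$ and taking $h\to 0$ in each piece; the two approaches require the same analytic input.

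The only real obstacle is the improper lower limit $-\infty$: continuity of $f$ and $f_t$ alone does not guarantee that $\int_{-\infty}^{u} f(t,s)\,ds$ or $\int_{-\infty}^{u} f_t(t,s)\,ds$ even converge, nor that $\lim_{M\to\infty}$ commutes with $\partial/\partial t$, so a uniform-convergence or dominated-convergence hypothesis is tacitly needed (this is presumably folded into the ambient assumptions of \cite{protter1985}, which is why the authors simply cite it rather than reprove it). In the application to the delay systems \eqref{SIR3-DE} and \eqref{SIR4-DE}, the relevant integrands carry exponentially decaying Erlang kernels $g_{k\alpha}^{b}(t-\tau)$ multiplied by factors bounded in terms of the conserved total population $N$, so the needed uniform integrability is automatic and the interchange is legitimate; everything else in the argument is routine bookkeeping.
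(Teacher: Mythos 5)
The paper does not actually prove this lemma: it is stated with the remark that ``the proofs can be found in the indicated literature'' and a citation to Protter--Morrey, so there is no in-paper argument to compare against. Your proof supplies the standard argument --- writing $F(t)=G(t,t)$ with $G(t,u)=\int_{-\infty}^{u}f(t,s)\,ds$, applying the chain rule, evaluating $G_u$ by the fundamental theorem of calculus and $G_t$ by the fixed-limits Leibniz rule on truncated intervals --- and this is exactly the textbook route; the difference-quotient variant you mention is the same argument in different clothing. Your one substantive observation is also correct and worth making explicit: as stated, continuity of $f$ and $f_t$ alone does not guarantee convergence of $\int_{-\infty}^{t}f(t,s)\,ds$ or of $\int_{-\infty}^{t}f_t(t,s)\,ds$, nor the passage of $\partial/\partial t$ through the limit $M\to\infty$, so a uniform-integrability or domination hypothesis is tacitly assumed. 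You rightly note that in the paper's applications the integrands carry Erlang kernels $g_{k\alpha}^{b}(t-\tau)$ with exponential decay and factors bounded by the conserved population $N$, so the interchange is legitimate there. In short: correct, standard, and appropriately honest about the hypothesis the paper's statement elides.
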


\begin{lemma}[Chapter 7, pp. 123-124, \cite{Smith2010}]
    \label{gamma}
    Consider the following series of Erlang distributions, with common rate parameter $k \alpha$ and the range of shape parameters $i= 1, \ldots, k$: 
\begin{equation}
    \label{erlang}
g^i_{k\alpha}(t) = \frac{(k\alpha)^it^{i-1}e^{-k \alpha t}}{(i-1)!}, \; i = 1 \ldots, k.
\end{equation}
The distributions \eqref{erlang} satisfy the following system of differential equations 
\begin{equation}
\label{eq:500}
    \left\{
    \begin{aligned}
        \frac{d}{dt} g_{k\alpha}^1(t) & = -k\alpha g_{k\alpha}^1(t), & & g_{k\alpha}^1(0) = k\alpha, & & \\
        \frac{d}{dt} g_{k\alpha}^i(t) & = k\alpha (g_{k\alpha}^{i-1}(t) - g_{k\alpha}^i(t)), & & g_{k\alpha}^i(0) = 0, \; \; \; & & i = 2, \ldots, k.
    \end{aligned}
    \right.
\end{equation}
\end{lemma}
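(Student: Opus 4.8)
The plan is to establish both identities by direct differentiation, since each $g^i_{k\alpha}(t)$ is given in explicit closed form. I would compute each derivative using the product rule on the factored expression $g^i_{k\alpha}(t) = \frac{(k\alpha)^i}{(i-1)!}\, t^{i-1} e^{-k\alpha t}$ and then check the initial conditions by evaluating at $t=0$.

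First I would treat the base case $i=1$. Here $t^{i-1} = t^0 = 1$ and $(i-1)! = 0! = 1$, so $g^1_{k\alpha}(t) = k\alpha\, e^{-k\alpha t}$. Differentiating gives $\frac{d}{dt} g^1_{k\alpha}(t) = -k\alpha \cdot k\alpha\, e^{-k\alpha t} = -k\alpha\, g^1_{k\alpha}(t)$, and evaluating at zero gives $g^1_{k\alpha}(0) = k\alpha$, which matches the stated equation and initial value.

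Next, for $i = 2, \ldots, k$, I would differentiate the product $t^{i-1} e^{-k\alpha t}$, which splits into two pieces: the polynomial-derivative piece $(i-1)\, t^{i-2} e^{-k\alpha t}$ and the exponential-derivative piece $-k\alpha\, t^{i-1} e^{-k\alpha t}$. After restoring the constant prefactor $\frac{(k\alpha)^i}{(i-1)!}$, the second piece is exactly $-k\alpha\, g^i_{k\alpha}(t)$. The one substantive step is to recognize that the first piece equals $k\alpha\, g^{i-1}_{k\alpha}(t)$: the factor $(i-1)$ cancels against the factorial to leave $\frac{(k\alpha)^i}{(i-2)!}\, t^{i-2} e^{-k\alpha t}$, and factoring a single $k\alpha$ out front produces $k\alpha \cdot \frac{(k\alpha)^{i-1}}{(i-2)!}\, t^{i-2} e^{-k\alpha t}$, which is precisely $k\alpha\, g^{i-1}_{k\alpha}(t)$ upon shifting the shape index from $i$ to $i-1$. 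Adding the two pieces yields $\frac{d}{dt} g^i_{k\alpha}(t) = k\alpha\bigl(g^{i-1}_{k\alpha}(t) - g^i_{k\alpha}(t)\bigr)$, as claimed.

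Finally, I would verify the initial conditions $g^i_{k\alpha}(0) = 0$ for $i \geq 2$: each such $g^i_{k\alpha}(t)$ carries the factor $t^{i-1}$ with $i-1 \geq 1$, which vanishes at $t=0$. The only place requiring any care is the index-shifting identification of the polynomial-derivative term with $k\alpha\, g^{i-1}_{k\alpha}$; everything else is mechanical differentiation and evaluation, so there is no genuine obstacle beyond careful bookkeeping of the factorials and powers.
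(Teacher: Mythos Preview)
Your proposal is correct and matches the paper's own proof essentially line for line: both treat $i=1$ separately, then apply the product rule to $t^{i-1}e^{-k\alpha t}$ for $i\ge 2$, identify the polynomial-derivative term with $k\alpha\,g^{i-1}_{k\alpha}(t)$ via the factorial cancellation, and finish by evaluating at $t=0$.
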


\begin{proof}
For $i=1$, we have
\[g^1_{k\alpha}(t) = k\alpha e^{-k \alpha t}\]
so that
\[\frac{d}{dt} g^1_{k\alpha}(t) = \frac{d}{dt} \left[ k\alpha e^{-k \alpha t}\right] = -k\alpha \left[ k\alpha e^{-k \alpha t}\right] = - k\alpha g^1_{k\alpha}(t)\]
and 
\[g^1_{k\alpha}(0) = k\alpha e^{-k\alpha(0)} = k\alpha.\]

For $i =2, \ldots, k$, we have
\[\begin{aligned}
\frac{d}{dt} g^i_{k\alpha}(t) & = \frac{d}{dt} \left[ \frac{(k\alpha)^it^{i-1}e^{-k \alpha t}}{(i-1)!}\right]\\
& = \frac{(i-1)(k\alpha)^i t^{i-2}e^{-k \alpha t}}{(i-1)!}-\frac{(k\alpha)(k\alpha)^it^{i-1}e^{-k \alpha t}}{(i-1)!} \\
& = (k\alpha) \left[ \frac{(k\alpha)^{i-1} t^{i-2}e^{-k \alpha t}}{(i-2)!} \right] - (k\alpha) \left[ \frac{(k\alpha)^it^{i-1}e^{-k \alpha t}}{(i-1)!}\right] \\
& = k\alpha ( g^{i-1}_{k\alpha}(t) - g^i_{k\alpha}(t) )
\end{aligned}\]
and
\[g^i_{k\alpha}(0) = \frac{(k\alpha)^i(0)^{i-1}e^{-k \alpha (0)}}{(i-1)!} = 0\]
and we are done.
\end{proof}

\subsection{Proof of Theorem \ref{thm:3}}
\label{sec:31}

For completeness, we reproduce Theorem \ref{thm:3} of the main text here.

\begin{theorem*}
    The system of distributed delay differential equations \eqref{SIR3-DE} is equivalent to the system of ordinary differential equations \eqref{SIR4-DE} with the following variable substitutions:
\[
    \begin{aligned}
        S_0(t) & = S(t) \\
        S_i(t) & = \frac{1}{k\alpha} \int_{-\infty}^t \Gamma(\tau) e^{-\frac{\beta_2}{N} \int_{\tau}^t I_2(s) \; ds} e^{-\lambda(t-\tau)} g_{k\alpha}^{r-i+1}(t-\tau) \; d\tau, & & i = 1, \ldots, r\\
        S_i(t) & = \frac{1}{k\alpha} \int_{-\infty}^t [\lambda (S(\tau)+R(\tau)) + \gamma(I_1(\tau)+I_2(\tau))]g_{k\alpha}^{k-i+1}(t-\tau)\; d\tau & & i = r+1, \ldots, k\\
    \end{aligned}
\]
where $R(t) = \sum_{i=1}^r S_i(t)$.
\end{theorem*}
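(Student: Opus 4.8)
The plan is to treat the formulas in the statement as \emph{definitions} of $S_1,\dots,S_k$ (together with $S_0 := S$ and $R := \sum_{i=1}^r S_i$) and to verify by direct differentiation that $(S_0,\dots,S_k,I_1,I_2)$ solves \eqref{SIR4-DE}; the converse direction then follows from uniqueness of solutions once the histories are matched. Before differentiating I would record the bookkeeping identities $\Pi(t) = k\alpha\,S_1(t)$, $\Gamma(t) = k\alpha\,S_{r+1}(t)$, and $\lambda(S+R) = \lambda\sum_{i=0}^r S_i$. The first two hold by comparing integrands: $S_1$ carries the Erlang kernel $g_{k\alpha}^{r}$, which is exactly the kernel in $\Pi$, and $S_{r+1}$ carries $g_{k\alpha}^{k-r}$, the kernel in $\Gamma$; the third is the definition of $R$. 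Granting these, the $S_0$, $I_1$, and $I_2$ equations of \eqref{SIR4-DE} are immediate from \eqref{SIR3-DE}: $S + R = \sum_{i=0}^r S_i$ converts $\frac{\beta_2}{N}(S+R)I_2$ into $\frac{\beta_2}{N} I_2\sum_{i=0}^r S_i$, and $\Pi = k\alpha S_1$ converts the inflow term in $dS/dt$.

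The heart of the proof is differentiating the two families of integrals with Lemma~\ref{leibniz}. For $i = 1,\dots,r$ write
\[
S_i(t) = \frac{1}{k\alpha}\int_{-\infty}^t \Gamma(\tau)\, e^{-\frac{\beta_2}{N}\int_\tau^t I_2(s)\,ds}\, e^{-\lambda(t-\tau)}\, g_{k\alpha}^{r-i+1}(t-\tau)\, d\tau .
\]
The Leibniz boundary term is $\tfrac{1}{k\alpha}\Gamma(t)\,g_{k\alpha}^{r-i+1}(0)$, which vanishes for $i\le r-1$ but equals $\Gamma(t) = k\alpha S_{r+1}(t)$ for $i=r$, since $g_{k\alpha}^1(0) = k\alpha$ by Lemma~\ref{gamma}. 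Under the integral, $\partial_t$ acts on three $t$-dependent factors: differentiating $e^{-\frac{\beta_2}{N}\int_\tau^t I_2}$ pulls out $-\tfrac{\beta_2}{N}I_2(t)$ and produces $-\tfrac{\beta_2}{N}I_2 S_i$; differentiating $e^{-\lambda(t-\tau)}$ produces $-\lambda S_i$; and differentiating $g_{k\alpha}^{r-i+1}(t-\tau)$ via the ODEs \eqref{eq:500} produces $k\alpha(S_{i+1} - S_i)$, where $k\alpha S_{r+1}$ is supplied by the boundary term when $i=r$. Summing these reproduces $\frac{dS_i}{dt} = -\frac{\beta_2}{N}S_iI_2 + k\alpha S_{i+1} - k\alpha S_i - \lambda S_i$. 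For $i = r+1,\dots,k$ the integrand has no exponential factors, so $\partial_t$ yields only the Erlang shift $k\alpha S_{i+1} - k\alpha S_i$ (with the convention $S_{k+1}\equiv 0$), while the boundary term vanishes for $i\le k-1$ and equals $\lambda(S+R) + \gamma(I_1+I_2)$ for $i=k$; invoking $\lambda(S+R) = \lambda\sum_{i=0}^r S_i$ gives the $dS_k/dt$ equation. As a consistency check, summing the $i=1,\dots,r$ equations and telescoping the shift terms returns $\frac{dR}{dt} = \Gamma - \Pi - \frac{\beta_2}{N}R I_2 - \lambda R$, matching \eqref{SIR3-DE}.

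The main obstacle is not a hard estimate but disciplined bookkeeping: keeping the shape-parameter shifts $r-i+1$ and $k-i+1$ correct, remembering that $g_{k\alpha}^j(0)$ is nonzero only for $j=1$, and treating the junction $i=r$ --- where the $\Gamma$-weighted chain hands off into the recovery-weighted chain --- as a genuine special case rather than an instance of the generic recursion. A secondary point is justifying the use of Lemma~\ref{leibniz}: one needs the integrands and their $t$-partials continuous and the improper integrals (and their differentiated forms) convergent, which holds once solutions are assumed bounded and continuous on $(-\infty,t]$ with $I_2\ge 0$, so that $e^{-\frac{\beta_2}{N}\int_\tau^t I_2}\le 1$; the degenerate cases $r=0$ or $r=k$, where one chain is empty, are covered by the same argument with the obvious conventions. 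Finally the converse --- that every solution of \eqref{SIR4-DE} arises from the stated substitution --- follows because the integral representations, viewed as functions of $t$, satisfy the very ODEs just derived with the same initial data, so uniqueness forces them to coincide with the ODE solution.
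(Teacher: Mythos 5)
Your proposal is correct and follows essentially the same route as the paper's proof in Appendix \ref{sec:31}: differentiate the integral substitutions via Lemma \ref{leibniz}, use the Erlang kernel ODEs \eqref{eq:500} to produce the chain terms, and handle the junctions at $i=r$ and $i=k$ through the boundary terms together with the identities $\Pi = k\alpha S_1$ and $\Gamma = k\alpha S_{r+1}$. Your added remarks on justifying the Leibniz rule and on the converse direction via uniqueness are reasonable refinements that the paper leaves implicit.
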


\begin{proof}

We start by differentiating the substitutions $S_i(t)$, $i=r+1, \ldots, k$. By Lemma \ref{leibniz}, we have the following:
\[
\begin{aligned}
    \frac{dS_i}{dt} & = \frac{d}{dt} \left[ \frac{1}{k\alpha} \int_{-\infty}^t [\lambda (S(\tau) + R(\tau)) + \gamma(I_1(\tau)+I_2(\tau))]g_{k\alpha}^{k-i+1}(t-\tau)\; d\tau \right]\\
    & = \frac{1}{k\alpha} (\lambda (S(t) + R(t)) + \gamma(I_1(t)+I_2(t)))[g_{k\alpha}^{k-i+1}(0)] \\ & \;\;\;\;+ \frac{1}{k\alpha} \int_{-\infty}^t (\lambda (S(\tau) + R(\tau))+\gamma(I_1(\tau)+I_2(\tau)))\left[ \frac{d}{dt} g_{k\alpha}^{k-i+1}(t-\tau)\right] \; d\tau
\end{aligned}
\]
For $i=k$, $\frac{d}{dt} g_{k\alpha}^{k-i+1}(t-\tau) = \frac{d}{dt} g_{k\alpha}^{1}(t-\tau)$ and $g_{k\alpha}^{k-i+1}(0) = g_{k\alpha}^1(0)$ so that, by the first row of \eqref{eq:500}, we have
\[
\begin{aligned}
    \frac{dS_k}{dt} & = \frac{1}{k\alpha}(\lambda (S(t) + R(t)) + \gamma(I_1(t) + I_2(t)))(k\alpha) \\ & \;\;\;\; - k\alpha \left[ \frac{1}{k\alpha} \int_{-\infty}^t (\lambda (S(\tau) + R(\tau)) + \gamma(I_1(\tau)+I_2(\tau))) g_{k\alpha}^{1}(t-\tau) \; d\tau \right] \\
    & = \lambda (S(t) + R(t)) + \gamma(I_1(t) + I_2(t)) - k\alpha S_k(t) \\
    & = \lambda \sum_{i=0}^r S_i(t) + \gamma(I_1(t) + I_2(t)) - k\alpha S_k.
\end{aligned}
\]
For $i=r+1, \ldots, k-1$, by the second row of \eqref{eq:500}, we have
\[ 
\begin{aligned}
    \frac{dS_i}{dt} &  = \frac{1}{k\alpha}(\lambda (S(t) + R(t)) + \gamma(I_1(t) + I_2(t))) ( 0 ) \\ & \;\;\;\; - k\alpha \left[ \frac{1}{k\alpha} \int_{-\infty}^t (\lambda (S(\tau) + R(\tau)) + \gamma(I_1(\tau)+I_2(\tau))) \left(g_{k\alpha}^{k-i+2}(t-\tau) - g_{k\alpha}^{k-i+1}(t-\tau)\right) \; d\tau \right] \\
    & = k\alpha \left( S_{i+1} - S_i \right).
\end{aligned}
\]

We now differentiate the substitutions $S_i(t)$, $i=1, \ldots, r$. By Lemma \ref{leibniz}, we have the following:
\[
\begin{aligned}
    \frac{dS_i}{dt} & = \frac{d}{dt} \left[ \frac{1}{k\alpha} \int_{-\infty}^t \Gamma(\tau) e^{-\frac{\beta_2}{N} \int_{\tau}^t I_2(s) \; ds} e^{-\lambda(t-\tau)} g_{k\alpha}^{r-i+1}(t-\tau) \; d\tau \right]\\
    & = \frac{1}{k\alpha} \Gamma(t) [g_{k\alpha}^{r-i+1}(0)] + \frac{1}{k\alpha} \int_{-\infty}^t \Gamma(\tau) \left[ \frac{d}{dt} e^{-\frac{\beta_2}{N} \int_{\tau}^t I_2(s) \; ds}\right] e^{-\lambda(t-\tau)} g_{k\alpha}^{r-i+1}(t-\tau) \; d\tau \\ & \;\;\;\;+  \frac{1}{k\alpha} \int_{-\infty}^t \Gamma(\tau) e^{-\frac{\beta_2}{N} \int_{\tau}^t I_2(s) \; ds} \left[ \frac{d}{dt} e^{-\lambda(t-\tau)}  \right] g_{k\alpha}^{r-i+1}(t-\tau) \; d\tau \\ & \;\;\;\;+ \frac{1}{k\alpha} \int_{-\infty}^t \Gamma(\tau) e^{-\frac{\beta_2}{N} \int_{\tau}^t I_2(s) \; ds} e^{-\lambda(t-\tau)} \left[ \frac{d}{dt} g_{k\alpha}^{r-i+1}(t-\tau) \right] \; d\tau\\
    & = \frac{1}{k\alpha} \Gamma(t) [g_{k\alpha}^{r-i+1}(0)] - \frac{\beta_2}{N} I_2(t) \left[ \frac{1}{k\alpha} \int_{-\infty}^t \Gamma(\tau) e^{-\frac{\beta_2}{N} \int_{\tau}^t I_2(s) \; ds} e^{-\lambda(t-\tau)} g_{k\alpha}^{r-i+1}(t-\tau) \; d\tau\right] \\ & \;\;\;\;- \lambda \left[ \frac{1}{k\alpha} \int_{-\infty}^t \Gamma(\tau) e^{-\frac{\beta_2}{N} \int_{\tau}^t I_2(s) \; ds} e^{-\lambda(t-\tau)} g_{k\alpha}^{r-i+1}(t-\tau) \; d\tau\right] \\ & \;\;\;\;+ \frac{1}{k\alpha} \int_{-\infty}^t \Gamma(\tau) e^{-\frac{\beta_2}{N} \int_{\tau}^t I_2(s) \; ds} e^{-\lambda(t-\tau)} \left[ \frac{d}{dt} g_{k\alpha}^{r-i+1}(t-\tau) \right] \; d\tau\\
    & = \frac{1}{k\alpha} \Gamma(t) [(g_{k\alpha}^{r-i+1}(0)] - \frac{\beta_2}{N} I_2(t) S_i(t) - \lambda S_i(t)\\ & \;\;\;\; + \frac{1}{k\alpha} \int_{-\infty}^t \Gamma(\tau) e^{-\frac{\beta_2}{N} \int_{\tau}^t I_2(s) \; ds} e^{-\lambda(t-\tau)} \left[ \frac{d}{dt} g_{k\alpha}^{r-i+1}(t-\tau) \right] \; d\tau.
\end{aligned}
\]
For $i=r$, $\frac{d}{dt} g_{k\alpha}^{r-i+1}(t-\tau) = \frac{d}{dt} g_{k\alpha}^{1}(t-\tau)$ and $g_{k\alpha}^{r-i+1}(0) = g_{k\alpha}^1(0)$ so that, by the first row of \eqref{eq:500}, we have
\[
\begin{aligned}
    \frac{dS_r}{dt} 
    & = \frac{1}{k\alpha}\Gamma(t) (k\alpha) - \frac{\beta_2}{N} I_2(t) S_r(t) - \lambda S_r(t) - k\alpha \left[ \frac{1}{k\alpha} \int_{-\infty}^t \Gamma(\tau) e^{-\frac{\beta_2}{N} \int_{\tau}^t I_2(s) \; ds} e^{-\lambda(t-\tau)}g_{k\alpha}^{1}(t-\tau) \; d\tau \right]\\
    & = \Gamma(t) - \frac{\beta_2}{N} I_2(t) S_r(t) - \lambda S_r(t) - k\alpha S_r(t) \\
    & = k \alpha \left[ \frac{1}{k \alpha} \int_{-\infty}^t (\lambda (S(\tau) + R(\tau)) + \gamma(I_1(\tau)+I_2(\tau))) g_{k\alpha}^{k-r}(t-\tau)\; d\tau \right] - \frac{\beta_2}{N} I(t) S_i(t) - \lambda S_r(t) - k\alpha S_r(t)\\
    & = k \alpha S_{r+1}(t) - \frac{\beta_2}{N} I(t) S_i(t) - \lambda S_r(t) - k\alpha S_r(t).
\end{aligned}
\]
For $i=1, \ldots, r-1$, by the second row of \eqref{eq:500}, we have
\[
\begin{aligned}
    \frac{dS_i}{dt} & = \frac{1}{k\alpha} \Gamma(t) (0) - \frac{\beta_2}{N} I_2(t) S_i(t) - \lambda S_r(t) \\ & \;\;\;\; + k\alpha \left[ \frac{1}{k\alpha} \int_{-\infty}^t \Gamma(\tau) e^{-\frac{\beta_2}{N} \int_{\tau}^t I_2(s) \; ds} e^{-\lambda(t-\tau)} \left( g_{k\alpha}^{r-i+2} - g_{k\alpha}^{r-i+1}(t-\tau) \right) \; d\tau \right] \\
    & = - \frac{\beta_2}{N} I_2(t) S_i(t) - \lambda S_i(t) + k\alpha \left( S_{i+1}(t) - S_i(t) \right).
\end{aligned}
\]
For $S_0$, we make the correspondence $S_0(t) = S(t)$ so that
\[
\begin{aligned}
    \frac{dS_0}{dt} = \frac{dS}{dt} & =  - \frac{\beta_1}{N} S(t) I_1(t) - \frac{\beta_2}{N} S(t) I_2(t) - \lambda S(t) + \Pi(t) \\
    & =  - \frac{\beta_1}{N} S_0(t) I_1(t) - \frac{\beta_2}{N} S_0(t) I_2(t)  - \lambda S_0(t) \\ & \;\;\;\; + k\alpha \left[ \frac{1}{k\alpha} \int_{-\infty}^t \Gamma(\tau) e^{-\frac{\beta_2}{N} \int_{\tau}^t I_2(s) \; ds} e^{-\lambda(t-\tau)} g_{k\alpha}^r(t- \tau) \; d\tau \right] \\
    & = - \frac{\beta_1}{N} S_0(t) I_1(t) - \frac{\beta_2}{N} S_0(t) I_2(t)  - \lambda S_0(t) + k\alpha S_1(t).
\end{aligned}
\]

We can also use the substitutions to derive the differential equations for $I_1(t)$, $I_2(t)$, and $R(t)$ in \eqref{SIR3-DE} from \eqref{SIR4-DE}. Since $S(t) = S_0(t)$ and $R(t) = \sum_{i=1}^r S_i(t)$, we have
\[ 
\left\{
\begin{aligned}
\frac{dI_1}{dt} & = \frac{\beta_1}{N} S_0(t)I_1(t) - \gamma I_1(t) = \frac{\beta_1}{N} S(t)I_1(t) - \gamma I_1(t) \\
\frac{dI_2}{dt} & = \frac{\beta_2}{N} \sum_{i=0}^r S_i(t) - \gamma I_2(t) = \frac{\beta_2}{N} (S(t)+R(t))I_2(t) - \gamma I_2(t).
\end{aligned}
\right.\]
For $R(t)$, noting that $k \alpha S_1(t) = \Pi(t)$ and $k\alpha S_{r+1}(t) = \Gamma(t)$, we have
\[
\begin{aligned}
\frac{dR}{dt} = \sum_{i=1}^r \frac{dS_i}{dt} & = \sum_{i=1}^{r} \left[ -\frac{\beta_2}{N} I_2(t) S_i(t) - \lambda S_i(t) + k\alpha \left( S_{i+1}(t) - S_i(t) \right) \right] \\ 
& = -\frac{\beta_2}{N} I_2(t) \sum_{i=1}^r S_i(t) - \lambda \sum_{i=1}^r S_i(t) + k\alpha \sum_{i=1}^r \left( S_{i+1}(t) - S_i(t) \right) \\
& = -\frac{\beta_2}{N} I_2(t) R(t) - \lambda R(t) + k\alpha S_{r+1}(t) - k\alpha S_1(t)\\
& = -\frac{\beta_2}{N} I_2(t) R(t) - \lambda R(t) + \Gamma(t) - \Pi(t)
\end{aligned}
\]
and we are done.
\end{proof}

\subsection{Proof of Theorem \ref{thm:4}}
\label{sec:32}

For completeness, we reproduce Theorem \ref{thm:4} here.
\begin{theorem*}
    The system of distributed delay differential equations \eqref{SIR5-DE} is equivalent to the system of ordinary differential equations \eqref{SIR6-DE} with the following variable substitutions:
\[
    \begin{aligned}
        S_0(t) & = S(t) \\
        S_i(t) & = \frac{1}{k\alpha} \int_{-\infty}^t \Gamma(\tau) e^{-\frac{\beta_2}{N} \int_{\tau}^t I_2(s) \; ds} e^{-\lambda(t-\tau)} g_{k\alpha}^{r-i+1}(t-\tau) \; d\tau, & & i = 1, \ldots, r\\
        S_i(t)  & = \frac{1}{k\alpha} \int_{-\infty}^t \gamma(I_1(\tau)+I_2(\tau)) g_{k\alpha}^{k-i+1}(t-\tau)\; d\tau, & & i = r+1, \ldots, k\\
    \end{aligned}
\]
where $R(t) = \sum_{i=1}^r S_i(t)$.
\end{theorem*}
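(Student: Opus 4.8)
The plan is to mirror the proof of Theorem~\ref{thm:3}: we substitute the proposed integral expressions for $S_1(t), \ldots, S_k(t)$ into the right-hand side of \eqref{SIR6-DE}, differentiate each substitution using the Leibniz rule (Lemma~\ref{leibniz}) together with the Erlang ODE identities of Lemma~\ref{gamma} (equation \eqref{eq:500}), and verify that the derivatives reproduce \eqref{SIR6-DE}; conversely we recover the equations of \eqref{SIR5-DE} from those of \eqref{SIR6-DE} under the same substitutions. The only structural differences from the integrated case are that here $\Gamma(t)$ in \eqref{Gamma2} carries only the recovery inflow $\gamma(I_1+I_2)$ --- since vaccinated individuals leave the partial-immunity chain into the separate class $V$ rather than re-entering $S_k$ --- and that there is an extra compartment $V$, whose $V \to S_0$ transition is left exponentially distributed (rate $\alpha$), so $V$ itself requires no linear-chain expansion; it suffices to check that the $V$ defined by \eqref{SIR6-DE} coincides with the $V$ of \eqref{SIR5-DE}.

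First I would differentiate $S_i(t)$ for $i = r+1, \ldots, k$. Since the integrand is $\gamma(I_1(\tau)+I_2(\tau))\,g_{k\alpha}^{k-i+1}(t-\tau)$, Lemma~\ref{leibniz} with the first row of \eqref{eq:500} (using $g_{k\alpha}^{1}(0) = k\alpha$) gives $\frac{dS_k}{dt} = \gamma(I_1+I_2) - k\alpha S_k$, and with the second row (using $g_{k\alpha}^{j}(0) = 0$ for $j \ge 2$) gives $\frac{dS_i}{dt} = k\alpha(S_{i+1} - S_i)$ for $i = r+1, \ldots, k-1$. Note the absence of a $\lambda\sum_{i=0}^r S_i$ term in $\frac{dS_k}{dt}$, consistent with $\Gamma$ dropping the $\lambda(S+R)$ contribution. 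Next I would differentiate $S_i(t)$ for $i = 1, \ldots, r$; these integrands are identical in form to those in Theorem~\ref{thm:3}, so the same four-term Leibniz expansion --- differentiating the factor $e^{-\frac{\beta_2}{N}\int_\tau^t I_2(s)\,ds}$, the factor $e^{-\lambda(t-\tau)}$, and the Erlang kernel $g_{k\alpha}^{r-i+1}(t-\tau)$ --- yields $\frac{dS_i}{dt} = -\tfrac{\beta_2}{N} I_2 S_i - \lambda S_i + k\alpha(S_{i+1} - S_i)$ for $i = 1, \ldots, r-1$, and, for $i = r$, using $k\alpha S_{r+1}(t) = \Gamma(t)$ (obtained by comparing the $i = r+1$ substitution with \eqref{Gamma2}), $\frac{dS_r}{dt} = k\alpha S_{r+1} - \tfrac{\beta_2}{N} I_2 S_r - \lambda S_r - k\alpha S_r$.

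It then remains to tie the two systems together. Setting $S_0(t) = S(t)$ and using $k\alpha S_1(t) = \Pi(t)$, the $S$-equation of \eqref{SIR5-DE} reads $\frac{dS_0}{dt} = -\tfrac{\beta_1}{N} S_0 I_1 - \tfrac{\beta_2}{N} S_0 I_2 - \lambda S_0 + \alpha V + k\alpha S_1$, which is the $S_0$-equation of \eqref{SIR6-DE}; the $I_1$- and $I_2$-equations follow at once from $S_0 = S$ and $R = \sum_{i=1}^r S_i$ (so that $\sum_{i=0}^r S_i = S + R$). Summing $\frac{dS_i}{dt}$ over $i = 1, \ldots, r$ and telescoping gives $\frac{dR}{dt} = -\tfrac{\beta_2}{N} I_2 R - \lambda R + k\alpha S_{r+1} - k\alpha S_1 = \Gamma(t) - \tfrac{\beta_2}{N} R I_2 - \lambda R - \Pi(t)$, the $R$-equation of \eqref{SIR5-DE}. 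Finally, since $\lambda\sum_{i=0}^r S_i = \lambda(S+R)$, the $V$-equation $\frac{dV}{dt} = \lambda\sum_{i=0}^r S_i - \alpha V$ of \eqref{SIR6-DE} is identical to $\frac{dV}{dt} = \lambda(S+R) - \alpha V$ of \eqref{SIR5-DE}, so $V$ is one and the same variable in both formulations.

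I expect the work to be essentially bookkeeping rather than conceptual. The delicate point, exactly as in Theorem~\ref{thm:3}, is carrying the time-dependent thinning factors $e^{-\frac{\beta_2}{N}\int_\tau^t I_2(s)\,ds}$ and $e^{-\lambda(t-\tau)}$ correctly through the Leibniz differentiation of the $i \le r$ substitutions, and tracking the Erlang shape-parameter shifts $r - i + 1$ and $k - i + 1$ so that the first-row/second-row dichotomy of \eqref{eq:500} is invoked at the right index ($i = r$ and $i = k$ versus $i < r$ and $r < i < k$) and the telescoping in the $R$-equation closes with precisely the boundary terms $k\alpha S_{r+1} = \Gamma$ and $k\alpha S_1 = \Pi$. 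The edge cases $r = 0$ (empty partial-immunity chain, $R \equiv 0$) and $r = k$ (no doubly-immune states) should be noted but follow from the same computation with the relevant sums empty.
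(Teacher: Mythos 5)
Your proposal is correct and follows essentially the same route as the paper: the paper's proof of this theorem simply defers to the proof of Theorem \ref{thm:3}, noting that the $\lambda(S+R)$ inflow is absent from the $i=r+1,\ldots,k$ chain (your observation that $\Gamma$ in \eqref{Gamma2} carries only $\gamma(I_1+I_2)$) and then verifying the $V$-equation via $\lambda\sum_{i=0}^r S_i = \lambda(S+R)$, exactly as you do. Your identifications $k\alpha S_{r+1}=\Gamma$ and $k\alpha S_1=\Pi$ and the telescoping for $R$ match the paper's argument.
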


\begin{proof}
The derivation of the equations for $S_i(t)$, $i=0, \ldots, k$, follow identically to that of those in the proof of Theorem \ref{thm:3}, with the exception that $\lambda = 0$ must be taken in the equations for $i=r+1, \ldots, k.$

The equations for $I_1(t)$, $I_2(t)$, and $R(t)$ are identical to those of Theorem \ref{thm:3}. It only remains to consider the equation for $V(t)$. We have
\[\frac{dV}{dt} = \lambda (S(t) + R(t)) - \alpha V(t) = \lambda \sum_{i=0}^r S_i(t)  - \alpha V(t)\]
since $S(t) = S_0(t)$ and $R(t) = \sum_{i=1}^r S_i(t)$, and we are done.
\end{proof}

\end{appendices}

\end{document}